\documentclass[leqno,11pt]{amsart}
\usepackage{amsmath,amscd,amsthm,amsxtra}
\usepackage{epsfig,graphics,color,colortbl}
\usepackage{amssymb,latexsym}
\usepackage{mathabx}
\usepackage{mathrsfs,eucal,upgreek}
\usepackage[poly,all]{xy}
\usepackage{hyperref}
\usepackage{tikz-cd}
\usepackage{arydshln}
\usepackage{ytableau}
\usepackage{adjustbox}
\usepackage{listings}
\usepackage{dirtytalk}
\usepackage[draft]{todonotes}
\usepackage{datetime}
\usepackage{tabularx} 
\usepackage{lipsum}
\usepackage{marginnote}
\usepackage[normalem]{ulem}  
\usepackage[nocompress]{cite}  
\usepackage{xparse} 
\usepackage{enumerate}

\setlength{\textwidth}{14cm} \setlength{\textheight}{21cm}
\setlength{\oddsidemargin}{1.2cm} \setlength{\evensidemargin}{1.2cm}
\renewcommand{\baselinestretch}{1.2}

\newtheorem{thm}{\bf Theorem}[section]

\newtheorem{prop}[thm]{\bf Proposition}
\newtheorem{cor}[thm]{\bf Corollary}
\newtheorem{lem}[thm]{\bf Lemma}
\newtheorem{rem}[thm]{\bf Remark}
\newtheorem{ex}[thm]{\bf Example}

\numberwithin{equation}{section}

\newcommand{\mr}{\mathring}
\newcommand{\ot}{\otimes}
\newcommand{\wtd}{\widetilde}
\newcommand{\wh}{\widehat}
\newcommand{\mc}{\mathcal}
\newcommand{\mf}{\mathfrak}
\newcommand{\ov}{\overline}
\newcommand{\scr}[1]{\text{\scriptsize{#1}}}
\newcommand{\ttt}{\texttt}

\newcommand{\blue}[1]{{\color{blue}#1}}
\newcommand{\red}[1]{{\color{red}#1}}
\newcommand{\green}[1]{{\color{green}#1}}
\newcommand{\lightgray}[1]{{\color{lightgray}#1}}
%

\newcommand{\Z}{\mathbb{Z}}
\newcommand{\R}{\mathbb{R}}
\newcommand{\C}{\mathbb{C}}
\newcommand{\X}{\mathcal{X}}
\newcommand{\Pa}{\mathscr{P}}
\newcommand{\Se}{\mathscr{S}}
\newcommand{\I}{\mathscr{I}}
\newcommand{\Io}{\mathscr{I}_\circ}
\newcommand{\m}{\mathsf{m}}
\newcommand{\bk}{\mathbf{k}}
\newcommand{\g}{\mathfrak{g}}
\newcommand{\h}{\mathfrak{h}}
\newcommand{\qi}{{q_i}}
\newcommand{\fc}{{\rm Rep}\left(U_q(\wh{\g})\right)}
\newcommand{\mcM}{\mathcal{M}}
\newcommand{\mcY}{\mathcal{Y}}
\newcommand{\A}{\mathscr{A}}

\newcommand{\frakc}{\mathfrak{c}}
\newcommand{\frakb}{\mathfrak{b}}
\newcommand{\frakd}{\mathfrak{d}}

\usepackage{todonotes}
\newcommand{\jang}[1]{\todo[size=\tiny,color=red!30]{#1 \ \hfill --- Jang}}
\newcommand{\Jang}[1]{\todo[size=\tiny,inline,color=red!30]{#1 \ \hfill --- Jang}}

\begin{document}

\title[Path description of $q$-characters in type $C$]
{Path description for $q$-characters of fundamental modules in type $C$}

\author{IL-SEUNG JANG}
\address{Department of Mathematics, Incheon National University, Incheon 22012, Korea}
\email{ilseungjang@inu.ac.kr}

\keywords{quantum affine algebras, fundamental modules, $q$-characters, path descriptions}
\subjclass[2010]{17B37, 17B65, 05E10}
\thanks{This work was supported by Incheon National University Research Grant in 2022 (No.~2022-0379).}

\begin{abstract}
In this paper, we investigate the behavior of monomials in the $q$-characters of the fundamental modules over a quantum affine algebra of untwisted type C.
As a result, we give simple closed formulae for the $q$-characters of the fundamental modules in terms of sequences of vertices in $\mathbb{R}^2$, so-called paths, with an admissible condition. This may be viewed as a type C analog of the path description of $q$-characters in types A and B due to Mukhin--Young.
\end{abstract}

\maketitle


\section{Introduction}
Let $\g$ be a finite-dimensional simple Lie algebra over $\mathbb{C}$.
We denote by $I$ an index set of the simple roots for $\g$.
Let $\wh{\g}$ be the associated affine Lie algebra of untwisted type \cite{Kac}. Let $U_q(\wh{\g})$ be the subquotient of the quantum affine algebra corresponding to $\wh{\g}$, that is, without the degree operator.
The notion of $q$-characters for finite-dimensional $U_q(\wh{\g})$-modules, which is defined in \cite{FR99}, is a refined analog of the ordinary character of finite-dimensional $\g$-modules.
More precisely, the {\it $q$-character homomorphism} (simply called $q$-character) is an injective ring homomorphism from the Grothendieck ring of the category of finite-dimensional $U_q(\wh{\g})$-modules into a ring of Laurent polynomials with infinitely many variables, denoted by $Y_{i,a}$ for $i \in I$ and $a \in \mathbb{C}^\times$.

The $q$-character is a useful tool for studying finite-dimensional $U_q(\wh{\g})$-modules \cite{FR99, FM01, H10, H10b}. Furthermore, it
has been widely used and studied in several viewpoints, such as crystal bases \cite{Na03, Kas03}, cluster algebras \cite{HL10}, and integrable systems \cite{FR99}. 
Despite its importance, it still seems to be a challenging problem to find explicit {\it closed} formulae for the $q$-characters of {\it arbitrary} finite-dimensional simple $U_q(\wh{\g})$-modules,
so there are several works which provide formulae {\it depending on $\g$ and what kind of $U_q(\wh{\g})$-modules}.

For example, for simply-laced types,
it was known in \cite{Na03, Na10} that arbitrary simple modules could be essentially computed by an analog of the Kazhdan--Lusztig algorithm, where closed formulae for the $q$-characters of standard modules are given in terms of corresponding classical crystals. 
In \cite{CM05, CM06}, Chari--Moura gave closed formulae for the $q$-characters of fundamental modules when $\g$ is of classical type by using the invariance of a braid group action on $\ell$-weights of fundamental modules. Note that the invariance is available only when $\g$ is of classical type.

Since the $q$-character is defined as in \eqref{eq:q-char morphism}, it is natural to associate a monomial with a sequence of vertices in $\mathbb{R}^2$.
In \cite{MY12}, when $\g$ is of type A and B, Mukhin and Young provided nice formulae of  $q$-characters for a certain family of finite-dimensional $U_q(\wh{\g})$-modules, so-called snake modules, in terms of non-overlapping paths that are sets of disjoint sequences in $\mathbb{R}^2$ with some admissible conditions reflecting features of monomials depending on $\g$.

The purpose of this paper is to investigate explicitly the behavior of monomials in the $q$-characters of fundamental modules in type C by adopting Mukhin--Young combinatorial approach, where the fundamental modules are the finite-dimensional $U_q(\wh{\g})$-modules corresponding to the variables $Y_{i,a}$'s under Chari--Pressley's classification of finite-dimensional (type 1) simple $U_q(\wh{\g})$-modules \cite{CP94,CP95}.
We should remark that the explicit formulas for the $q$-characters of fundamental modules (in type C) are already known by \cite{KS,CM06,NN06, NN07a}, but our formula as shown below seems to be more accessible combinatorially than other formulae.

As a result, we give simple closed formulae for the $q$-characters of fundamental modules in terms of {\it admissible} paths.
More precisely, let $L(Y_{i,k})$ be the fundamental $U_q(\wh{\g})$-module corresponding to $Y_{i,k} := Y_{i,q^k}$, where $\g$ is of type $C_n$ and $(i,k) \in I \times \Z_+$. 
Then we define a set of sequences $(j, \ell_j)_{0 \le j \le 2n}$ in $\mathbb{R}^2$ satisfying \eqref{def:a path}, denoted by $\Pa_{i,k}$, and then collect paths with an admissible condition as follows:
\begin{equation*}
	\ov{\Pa}_{i,k}  =
    \left\{\, 
	(j, \ell_j)_{0 \le j \le 2n} \in \Pa_{i,k}  \, | \, \ell_j \le \ell_{N-j} \, \text{\,for $j \in I$} 
	\,\right\}.
\end{equation*}
Then we have the following path description of $\chi_q(L(Y_{i,k}))$, which is the main result of this paper.

\begin{thm}[Theorem \ref{thm:main1}]
    For $i \in I$ and $k \in \Z_+$, we have
    \begin{equation} \label{eq:path model intro}
    \begin{split}
    	\chi_q(L(Y_{i, k}))
		=
		\sum_{p\, \in\, \ov{\Pa}_{i,k} } \m (p),
	\end{split}
    \end{equation}
    where $\m$ is defined in \eqref{eq:monomial map}.
\end{thm}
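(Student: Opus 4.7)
The plan is to prove \eqref{eq:path model intro} by matching the right-hand side with one of the already-known closed formulae for $\chi_q(L(Y_{i,k}))$ in type $C_n$, namely the Nakai--Nakanishi formula \cite{NN06,NN07a} (or equivalently the Chari--Moura formula \cite{CM06}). Those formulae express $\chi_q(L(Y_{i,k}))$ as a sum over Kashiwara--Nakashima-style column tableaux on the alphabet $\{1,2,\dots,n,\ov{n},\dots,\ov{1}\}$, each contributing an explicit monomial in the $Y_{s,t}^{\pm 1}$. My first step is to rewrite that indexing set as sequences of ``heights'' on the positions $\{0,1,\dots,2n\}$, so that each tableau is encoded as the same kind of object $(j,\ell_j)_{0\le j\le 2n}$ that lives in $\Pa_{i,k}$.

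Next I would construct an explicit bijection
\[
    \Phi : \ov{\Pa}_{i,k} \longrightarrow \mc{T}_{i,k},
\]
where $\mc{T}_{i,k}$ denotes this reformulated indexing set. The map $\Phi$ reads off tableau entries from the jumps $\ell_{j-1}\to \ell_{j}$ of a path, and the admissibility condition $\ell_j \le \ell_{N-j}$ (with $N=2n$) should correspond precisely to the type $C$ column-strict condition that distinguishes Kashiwara--Nakashima tableaux from their type $A$ analogues. I would then verify (i) that $\Phi$ is bijective, with an inverse reconstructing the $\ell_j$ from the tableau entries, and (ii) that under $\Phi$ the monomial $\m(p)$ defined in \eqref{eq:monomial map} matches, factor by factor, the monomial attached to $\Phi(p)$ in the known formula. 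Step (ii) reduces to a local calculation: each step $\ell_{j-1}\to \ell_j$ contributes a single $Y^{\pm 1}_{s,t}$ whose indices depend only on $(j,\ell_{j-1},\ell_j)$, to be cross-checked against the letter produced by $\Phi$.

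The main obstacle, as I see it, will be the behaviour near the middle position $j=n$, where the short simple root $\alpha_n$ meets its long-root neighbours. There the admissibility inequality degenerates to $\ell_n\le \ell_n$, and the monomial map must encode the type $C$ short/long asymmetry that is absent from the type A and B Mukhin--Young setups of \cite{MY12}. Matching these central contributions, and in particular the ``turning point'' from unbarred to barred letters in the tableau language, is where the ideas of \cite{MY12} cannot be imported wholesale and a careful type $C$ case analysis is needed.

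A safety-net alternative, if the tableau bijection becomes cumbersome, is to verify \eqref{eq:path model intro} directly via the Frenkel--Mukhin algorithm: show that the right-hand side of \eqref{eq:path model intro} has $Y_{i,k}$ as its unique dominant monomial (with multiplicity one) and is closed under the screening operators $S_i$ for each $i\in I$. Since a finite-dimensional simple $q$-character is determined by its dominant monomial together with these closure properties, this would identify the right-hand side with $\chi_q(L(Y_{i,k}))$ without reference to any prior formula.
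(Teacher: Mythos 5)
Your ``safety-net alternative'' is in fact exactly the route the paper takes, so the two approaches should be compared with that in mind. The paper does not match the path sum against the known tableau formulae of \cite{NN06,NN07a,CM06}; instead it shows directly that $\chi := \sum_{p\in\ov{\Pa}_{i,k}}\m(p)$ lies in $\bigcap_{j}\ker(S_j)$ and has $Y_{i,k}$ as its unique dominant monomial, then invokes Theorem \ref{thm:Image of qchar is the intersection of kernels of Si} and Lemma \ref{lem:unqiue dominant monomial}. What your sketch of this route leaves out is precisely where all the work lies: to verify $\chi\in\ker(S_j)$ one needs (a) the injectivity of $\m$ on admissible paths (Lemma \ref{lem:thin property}), so that contributions from different groups of paths cannot interfere; (b) a decomposition of $\ov{\Pa}_{i,k}$ into $j$-connected components under the raising/lowering moves (Proposition \ref{prop:disjoint union}); and (c) a case analysis (Lemma \ref{lem:maximal length of j-component} and the four cases in the proof of Theorem \ref{thm:main1}) showing each component has at most four paths and that its monomial sum factors through the explicit description \eqref{eq:description of ker Si} of $\ker(S_j)$. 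The genuinely type-$C$ subtlety here is that the two columns $j$ and $N-j$ both carry the index $\ov{j}=j$, so a single $j$-component can mix corners in two columns and the admissibility condition \eqref{eq:admissible paths} restricts which moves stay inside $\ov{\Pa}_{i,k}$ --- this is the analogue, in the screening-operator language, of the ``middle position'' difficulty you correctly identify for the tableau route. Your primary route (a bijection with Kashiwara--Nakashima-style columns) is a legitimately different strategy: it would outsource the representation theory to \cite{CM06} or \cite{NN06} at the price of a delicate combinatorial matching near $j=n$ and of producing a less self-contained argument; the paper's choice of the screening-operator route is deliberate (see the remark after the corollary), since it is the version the author hopes to extend beyond fundamental modules. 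As written, neither of your two routes is carried far enough to constitute a proof, but the second one is the correct skeleton of the paper's argument, modulo the component analysis above and the observation that the final identification $\chi=\chi_q(L(Y_{i,k}))$ also uses the last assertion of Theorem \ref{thm:Image of qchar is the intersection of kernels of Si} (a nonzero element of the image has a dominant monomial) applied to differences, not merely the Frenkel--Mukhin algorithm of \cite{FM01}.
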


\noindent
As a corollary, we verify their thin property that every coefficient of monomials is $1$, which was already known in \cite{KS} (cf.~\cite{H05}).

We would like to emphasize that the underlying combinatorics of paths is essentially identical to the one of type A in \cite{MY12}, except for the admissible condition to reflect features of monomials for type C.
Therefore, it is natural to ask whether there is a notion of non-overlapping paths as in \cite{MY12} to introduce a type C analog to snake modules. 
However, there is an example in which a monomial corresponding to an overlapping path is contained in the $q$-character of a simple module under the path description (see Section \ref{subsec:remarks} for the example). 

Nevertheless, it would be interesting to find a suitable condition on (overlapping) admissible paths in this paper to extend \eqref{eq:path model intro} to higher levels (at least Kirillov--Reshetikhin modules) from the viewpoint of the recent work \cite{GDL} in which the authors introduced a notion of compatible condition on paths that are allowed to overlap to describe the $q$-characters of Hernandez--Leclerc modules for type A.
This may be discussed elsewhere.

This paper is organized as follows.
In Section \ref{sec:paths}, we set up necessary combinatorics for type C, such as paths, corners of paths, and moves on paths following some conventions of \cite{MY12}.
In Section \ref{sec:q-chars}, we briefly review some definitions and background for $q$-characters.
In Section \ref{sec:path description}, we give a path description for the $q$-characters of fundamental modules, which is the main result of this paper.
Finally, in Section \ref{sec:examples}, we provide the path descriptions of all fundamental modules in type $C_3$ to illustrate our main result and give some remarks for further discussion.
\vskip 3mm

\noindent{\bf Acknowledgement.} 
The author would like to thank Se-jin Oh for many stimulating and helpful discussions; to Sin-Myung Lee for his valuable comments. 
He also would like to thank sincerely the anonymous referee for helpful and detailed comments.
This work was supported by Incheon National University Research Grant in 2022 (No.~2022-0379).
\smallskip

\noindent
{\bf Convention}. 
We denote by $\Z_+$ the set of non-negative integers. For a finite set $S$, we understand the symbol $|S|$ as the number of elements of $S$.
If $\bk$ is a ring or field, then we denote by $\bk^\times$ the set of all non-zero elements of $\bk$. 
We use the notation $\delta$ in which $\delta(\mathsf{p}) = 1$ if a statement $\mathsf{p}$ is true, and $0$ otherwise.
In particular, put $\delta(i=j) := \delta_{ij}$ for $i, j \in \Z$.
Let $z$ be an indeterminate. Then we define the $z$-numbers, $z$-factorial and $z$-binomial by
{\allowdisplaybreaks
	\begin{gather*}
		[m]_z=\frac{z^m-z^{-m}}{z-z^{-1}}\,\, (m \in \Z_+),\,\,
		[m]_z!=[m]_z[m-1]_z\cdots [1]_z \,\, (m \geq 1),\,\, [0]_z!=1, \\
		\begin{bmatrix} m \\ k \end{bmatrix}_z = \frac{[m]_z[m-1]_z\cdots [m-k+1]_z}{[k]_z}\quad (0 \leq k \leq m).
	\end{gather*}
}

\section{Paths, Corners, and Moves} \label{sec:paths}

\subsection{Paths and corners}
For $n \ge 1$,
let $I = \{\, 1, 2, \dots, n \,\}$, $N = 2n$, $\I = \{ i \in \mathbb{Z} \, | \, 0 \le i \le N \}$ and $\Io = \I \setminus \{ 0, N \}$.
Set
\begin{equation} \label{eq:X}
	\X = \left\{\, (i, k) \in \I \times \Z \,\, | \,\, i - k \equiv 1 \,\, \text{mod} \,\, 2 \,\right\}.
\end{equation}
We denote by $\Se$ the set of all sequences in $\X$.
Let us take $(i, k) \in \X$.
We define a subset $\Pa_{i,k} $ of $\Se$ consisting of $p = ((r, y_r))_{r \in \I}$ satisfying the following condition:
\begin{equation} \label{def:a path}
\begin{split}
    & y_0 = i+k, \quad y_{N} = N-i+k, \\
	& y_{r+1}-y_r \in \{-1,\,1\} \quad 
    \text{for $0 \le r \le N-1$.}
\end{split}
\end{equation}
\vskip 1mm

\noindent
Set $\Pa  = \bigsqcup_{(i, k) \in \X} \Pa_{i,k} $.
We call a sequence in $\Pa $ by a {\it path}.
If $(j, \ell)$ is a point in $p \in \Pa $, then we often write $(j, \ell) \in p$ simply.
\smallskip

For $k \in \Io$, put
\begin{equation*} 
    \overline{k} = 
    \begin{cases}
        k & \text{if $k \le n$,} \\
        N-k & \text{if $k > n$.}
    \end{cases}
\end{equation*}
For a path $p = ((r, y_r))_{r \in \I} \in \Pa $, we define the sets $C_{p, \pm} $ of {\it upper} and {\it lower corners} of $p$, respectively, by
\begin{equation*}
    \begin{split}
        C_{p,+}  &= \left\{ \, (r,y_r) \in \X \, | \, r \in \Io,\,\, y_{r-1} = y_r + 1 = y_{r+1} \, \right\},\\
        C_{p,-}  &= \left\{ \, (r,y_r) \in \X \, | \, r \in \Io,\,\, y_{r-1} = y_r - 1 = y_{r+1} \, \right\}.
    \end{split}
\end{equation*}

\begin{rem} \label{rem:upper and lower corners}
{\em 
For any path $p \in \Pa$, if $p$ has two upper (resp.~lower) corners, then it is straightforward to check by \eqref{def:a path} that $p$ should have a lower (resp.~upper) corner between them. 
In particular, if a path has no lower (resp.~upper) corners, it has at most one upper (resp.~lower) corner.
}
\end{rem}

The following lemma is directly proved by definition (cf.~\cite[Lemma 5.5]{MY12}).

\begin{lem} \label{lem:characterization of paths}
    Any path in $\Pa$ has at least one corner and it is completely characterized by its upper and lower corners.
\end{lem}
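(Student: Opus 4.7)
The plan is to split the lemma into its two assertions and handle each by a direct combinatorial argument, using Remark \ref{rem:upper and lower corners} as the key structural input. Before starting, I note that for $i \in \{0, N\}$ the set $\Pa_{i,k}$ consists of a single monotone sequence with no corners, so I interpret the lemma under the implicit hypothesis $i \in \Io$ used in the main theorem (where $i$ ranges over $I = \{1, \ldots, n\}$).

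For existence of a corner, I would argue by contraposition. If $p = ((r, y_r))_{r \in \I}$ has no corners, then for every $r \in \Io$ the triple $(y_{r-1}, y_r, y_{r+1})$ is strictly monotone, forcing the successive increments $y_{r+1} - y_r$ to share a common sign $\varepsilon \in \{-1, +1\}$. Summing yields $y_N - y_0 = \varepsilon N$, whereas \eqref{def:a path} gives $y_N - y_0 = N - 2i$; hence $i \in \{0, N\}$, contradicting $i \in \Io$.

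For the characterization, I would reconstruct $p$ from $C_{p,+} \cup C_{p,-}$. Listing the corners in increasing order of first coordinate, $r_1 < \cdots < r_m$, the definition of corner forces the path to be strictly monotone on each interval $\{r_j, r_j+1, \ldots, r_{j+1}\}$, with the direction of monotonicity dictated by the type of $r_j$; by Remark \ref{rem:upper and lower corners}, $r_{j+1}$ has the opposite type, so this prescribed direction is also consistent with the local geometry at $r_{j+1}$. The boundary segments $\{0, \ldots, r_1\}$ and $\{r_m, \ldots, N\}$ are likewise monotone, with direction determined by the types of $r_1$ and $r_m$; combined with the corner heights they recover $y_0$ and $y_N$ and hence the entire sequence.

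The reconstruction is routine once the alternation of corner types is available, so the only real obstacle is the degenerate case $i \in \{0, N\}$, which is sidestepped by the implicit convention above. It remains to verify that the sequence produced this way genuinely satisfies \eqref{def:a path}, which is immediate: successive entries differ by $\pm 1$ throughout each monotone segment, and the prescribed endpoint heights $y_0 = i + k$ and $y_N = N - i + k$ are built into the reconstruction.
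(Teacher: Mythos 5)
Your proof is correct and supplies exactly the direct-from-definitions verification that the paper leaves implicit (the paper gives no argument, remarking only that the lemma ``is directly proved by definition''), so the two approaches coincide in substance. Your caveat about the degenerate indices is also well taken: the unique monotone paths in $\Pa_{0,k}$ and $\Pa_{N,k}$ have no corners, so the first assertion of the lemma literally requires restricting to $i \in \Io$ as you do; since the paper only ever invokes the lemma for $i \in I$, nothing downstream is affected.
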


\subsection{Moves on paths} \label{subsec:moves}
In this section, we will define two operators on $\Pa $ to obtain new paths in $\Pa $, which are called {\em raising} and {\em lowering moves}, respectively.

\subsubsection{Lowering moves}
For $(i,k) \in \X$, we say that a path $p \in \Pa_{i,k} $ can be {\it lowered} at $(j,\ell) \in p$ if $(j, \ell-1) \in C_{p,+} $.
If a path $p$ can be lowered at $(j,\ell) \in p$, then we define a {\it lowering move} on $p$ at $(j, \ell)$ in which we obtain a new path in $\Pa_{i,k} $ denoted by $p\A_{\ov{j},\ell}^{-1}$.
If a path $p \in \Pa_{i,k} $ can be lowered at $(j,\ell) \in p$, then
we define $p\A_{\ov{j},\ell}^{-1}$ by
\begin{equation*}
    \begin{split}
        & p\A_{\ov{j},\ell}^{-1} 
        := \left( \, \dots,\, (j-1, \ell),\, (j, \ell+1),\, (j, \ell),\, \dots\, \right),
    \end{split}
\end{equation*}
where the lowering move is depicted as shown below:
\begin{equation*}
    \begin{split}
        \begin{tikzpicture}[scale=0.6]
            \draw[help lines, color=gray!30, dashed] (-2.1,-1.1) grid (2.1, 1.5); 
            \node at (-1, 1.5) {\scalebox{0.8}{\scr{$j-1$}}};
            \node at (0, 1.5) {\scalebox{0.8}{\scr{$j$}}};
            \node at (1, 1.5) {\scalebox{0.8}{\scr{$j+1$}}};
            %
            \node at (-3, 1) {\scalebox{0.8}{\scr{$\ell-1$}}};
            \node at (-3, 0) {\scalebox{0.8}{\scr{$\ell$}}};
            \node at (-3, -1) {\scalebox{0.8}{\scr{$\ell+1$}}};
            \node (1) at (0, 1) {\lightgray{$\bullet$}};
            \node (2) at (0,-1) {$\bullet$};
            \node (3) at (-1,  0) {$\bullet$};
            \node (4) at (1,  0) {$\bullet$};
            %
            \draw[dashed, draw=lightgray] (3.center) -- (1.center);
            \draw[dashed, draw=lightgray] (4.center) -- (1.center);
            \draw[draw=lightgray] (3.center) -- (2.center);
            \draw[draw=lightgray] (4.center) -- (2.center);
            \draw[->, double] (1) -- (2);
        \end{tikzpicture}
    \end{split}
\end{equation*}
Note that it is straightforward to check that the new path obtained from a path $p \in \Pa_{i,k} $ by a lowering move is in $\Pa_{i,k} $.

\subsubsection{Raising moves}
For $(i,k) \in \X$, we say that a path $p \in \Pa_{i,k} $ can be {\it raised} at $(j,\ell) \in p$ if $(j, \ell+1) \in C_{p,-} $.
If a path $p$ can be raised at $(j,\ell) \in p$, then we define a {\it raising move} on $p$ at $(j, \ell)$ in an obvious reversed way of the lowering moves, in which we obtain a new path in $\Pa_{i,k} $ denoted by $p\A_{\ov{j},\ell}$.
We also notice that it is straightforward to check that the new path obtained from a path $p \in \Pa_{i,k} $ by a raising move is in $\Pa_{i,k} $.

\subsubsection{The highest and lowest paths of $\Pa_{i,k} $}
For $(i,k) \in \X$, we define the {\it highest} (resp.~{\it lowest}) path in $\Pa_{i,k} $ to be the path with no lower (resp.~upper) corners.

\begin{lem} \label{lem:highest and lowest paths}
    The highest and lowest paths in $\Pa_{i,k} $ are unique.
\end{lem}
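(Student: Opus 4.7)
The plan is to combine Remark~\ref{rem:upper and lower corners} with Lemma~\ref{lem:characterization of paths} to reduce the problem to pinning down a single corner. By definition, the highest path $p \in \Pa_{i,k}$ satisfies $C_{p,-} = \emptyset$, so Remark~\ref{rem:upper and lower corners} forces $|C_{p,+}| \le 1$, while Lemma~\ref{lem:characterization of paths} guarantees that every path has at least one corner; hence $|C_{p,+}| = 1$. Since Lemma~\ref{lem:characterization of paths} also asserts that a path is completely determined by its corners, it suffices to show that the position of this unique upper corner is uniquely forced. The argument for the lowest path will be entirely symmetric, with the roles of upper and lower corners (and the direction of monotonicity) interchanged.

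Next I would let $(j, \ell)$ denote the unique upper corner and record the observation that \emph{no lower corner on an interval} implies \emph{strict monotonicity on that interval}: since each step $y_{r+1} - y_r$ lies in $\{-1, +1\}$, a sign change between two consecutive steps produces either an upper or a lower corner at the intermediate index, so the absence of lower corners on $[0, j]$ forces $(y_r)_{0 \le r \le j}$ to be strictly decreasing, and the absence on $[j, N]$ forces $(y_r)_{j \le r \le N}$ to be strictly increasing. Combined with the boundary conditions $y_0 = i+k$ and $y_N = N-i+k$ from \eqref{def:a path}, this yields the two linear equations $\ell = i+k-j$ and $\ell = j-i+k$, whose unique solution is $(j, \ell) = (i, k)$.

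An analogous computation places the unique lower corner of the lowest path at $(N-i,\, N+k)$, after which Lemma~\ref{lem:characterization of paths} finishes the proof. The main (and essentially only) obstacle is the careful justification of the monotonicity claim used above; once this is in place, everything else is either an immediate consequence of the definitions or a short arithmetic computation.
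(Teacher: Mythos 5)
Your proof is correct and follows essentially the same route as the paper's: both rest on Remark~\ref{rem:upper and lower corners} (a path with no lower corners has at most one upper corner, and vice versa) combined with Lemma~\ref{lem:characterization of paths} (every path has at least one corner and is determined by its corners). You additionally spell out the computation that pins the unique corner at $(i,k)$ (resp.\ $(N-i,\,N+k)$) via monotonicity and the boundary conditions of \eqref{def:a path} — a detail the paper handles instead by exhibiting the standard highest/lowest path explicitly — and the only quibble is that the monotonicity on $[0,j]$ really follows from the absence of \emph{any} corner in the open interval $(0,j)$ (which you have already established), not from the absence of lower corners alone.
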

\begin{proof}
	Let $(i,k) \in \X$. There is at least one standard choice of the highest and lowest paths in $\Pa_{i,k} $. To be more precise, we define a sequence $p = ((r, y_r))_{r \in \I}$ by $y_0 = i + k$, $y_N = N-i+k$, and 
	\begin{equation*}
		y_{r+1} - y_r = 
		\begin{cases}
			-1 & \text{if $0 \le r < i$,} \\
			\,\,\,\,\,1 & \text{if $i \le r \le N-1$.}
		\end{cases}
	\end{equation*}
	Clearly, $p \in \Pa_{i,k}$ and it has a unique upper corner. Moreover, $p$ has no lower corners by its definition. By Remark \ref{rem:upper and lower corners}, the uniqueness for the upper corner of $p$ follows. 
	The proof for lowest paths is similar (cf.~Example \ref{ex:highest and lowest corners}).
\end{proof}

\begin{ex} \label{ex:highest and lowest corners}
{\em 
Let $n = 3$.
Then the highest and lowest paths in $\Pa_{i,k}$ ($i \in I$ and $k \in \left\{ 0, 1 \right\}$ with $i-k \equiv 0 \mod{1}$) are drawn as follows, which are of the standard choice in the proof of Lemma \ref{lem:highest and lowest paths}:
\begin{equation*}
\begin{split}
& \begin{tikzpicture}[scale=0.35, baseline=(current  bounding  box.center)]
            \draw[help lines, color=gray!30, dashed] (-3.1,-5.1) grid (3.1, 3.5); 
            %
            \node at (-3, 3.5) {\scr{$0$}};
            \node at (-2, 3.5) {\scr{$1$}};
            \node at (-1, 3.5) {\scr{$2$}};
            \node at (0, 3.5) {\scr{$3$}};
            \node at (1, 3.5) {\scr{$4$}};
            \node at (2, 3.5) {\scr{$5$}};
            \node at (3, 3.5) {\scr{$6$}};
            %
            \node at (-3.7, 2) {\scr{$0$}};
            \node at (-3.7, 1) {\scr{$1$}};
            \node at (-3.7, 0) {\scr{$2$}};
            \node at (-3.7, -1) {\scr{$3$}};
            \node at (-3.7, -2) {\scr{$4$}};
            \node at (-3.7, -3) {\scr{$5$}};
            \node at (-3.7, -4) {\scr{$6$}};
            %
            \node (0) at (-3,   1) {$\bullet$};
            \node (1) at (-2,   2) {\red{$\bullet$}};
            \node (2) at (-1,   1) {$\bullet$};
            \node (3) at ( 0,   0) {$\bullet$};
            \node (4) at ( 1,   -1) {$\bullet$};
            \node (5) at ( 2,   -2) {$\bullet$};
            \node (6) at ( 3,   -3) {$\bullet$};
            %
            \draw[draw=lightgray] (0.center) -- (1.center);
            \draw[draw=lightgray] (1.center) -- (2.center);
            \draw[draw=lightgray] (2.center) -- (3.center);
            \draw[draw=lightgray] (3.center) -- (4.center);
            \draw[draw=lightgray] (4.center) -- (5.center);
            \draw[draw=lightgray] (5.center) -- (6.center);
            \node at (1, -5.5) {\scr{\text{the highest path in $\Pa_{1,0}$}}};
	\end{tikzpicture}
	\qquad
	\begin{tikzpicture}[scale=0.35, baseline=(current  bounding  box.center)]
            \draw[help lines, color=gray!30, dashed] (-3.1,-5.1) grid (3.1, 3.5); 
            %
            \node at (-3, 3.5) {\scr{$0$}};
            \node at (-2, 3.5) {\scr{$1$}};
            \node at (-1, 3.5) {\scr{$2$}};
            \node at (0, 3.5) {\scr{$3$}};
            \node at (1, 3.5) {\scr{$4$}};
            \node at (2, 3.5) {\scr{$5$}};
            \node at (3, 3.5) {\scr{$6$}};
            %
            \node at (-3.7, 2) {\scr{$1$}};
            \node at (-3.7, 1) {\scr{$2$}};
            \node at (-3.7, 0) {\scr{$3$}};
            \node at (-3.7, -1) {\scr{$4$}};
            \node at (-3.7, -2) {\scr{$5$}};
            \node at (-3.7, -3) {\scr{$6$}};
            \node at (-3.7, -4) {\scr{$7$}};
            %
            \node (0) at (-3,   0) {$\bullet$};
            \node (1) at (-2,   1) {$\bullet$};
            \node (2) at (-1,   2) {\red{$\bullet$}};
            \node (3) at ( 0,   1) {$\bullet$};
            \node (4) at ( 1,   0) {$\bullet$};
            \node (5) at ( 2,   -1) {$\bullet$};
            \node (6) at ( 3,   -2) {$\bullet$};
            %
            \draw[draw=lightgray] (0.center) -- (1.center);
            \draw[draw=lightgray] (1.center) -- (2.center);
            \draw[draw=lightgray] (2.center) -- (3.center);
            \draw[draw=lightgray] (3.center) -- (4.center);
            \draw[draw=lightgray] (4.center) -- (5.center);
            \draw[draw=lightgray] (5.center) -- (6.center);
            \node at (1, -5.5) {\scr{\text{the highest path in $\Pa_{2,1}$}}};
	\end{tikzpicture}
	\qquad
	\begin{tikzpicture}[scale=0.35, baseline=(current  bounding  box.center)]
            \draw[help lines, color=gray!30, dashed] (-3.1,-5.1) grid (3.1, 3.5); 
            %
            \node at (-3, 3.5) {\scr{$0$}};
            \node at (-2, 3.5) {\scr{$1$}};
            \node at (-1, 3.5) {\scr{$2$}};
            \node at (0, 3.5) {\scr{$3$}};
            \node at (1, 3.5) {\scr{$4$}};
            \node at (2, 3.5) {\scr{$5$}};
            \node at (3, 3.5) {\scr{$6$}};
            %
            \node at (-3.7, 2) {\scr{$0$}};
            \node at (-3.7, 1) {\scr{$1$}};
            \node at (-3.7, 0) {\scr{$2$}};
            \node at (-3.7, -1) {\scr{$3$}};
            \node at (-3.7, -2) {\scr{$4$}};
            \node at (-3.7, -3) {\scr{$5$}};
            \node at (-3.7, -4) {\scr{$6$}};
            %
            \node (0) at (-3,   -1) {$\bullet$};
            \node (1) at (-2,   0) {$\bullet$};
            \node (2) at (-1,   1) {$\bullet$};
            \node (3) at ( 0,   2) {\red{$\bullet$}};
            \node (4) at ( 1,   1) {$\bullet$};
            \node (5) at ( 2,   0) {$\bullet$};
            \node (6) at ( 3,   -1) {$\bullet$};
            %
            \draw[draw=lightgray] (0.center) -- (1.center);
            \draw[draw=lightgray] (1.center) -- (2.center);
            \draw[draw=lightgray] (2.center) -- (3.center);
            \draw[draw=lightgray] (3.center) -- (4.center);
            \draw[draw=lightgray] (4.center) -- (5.center);
            \draw[draw=lightgray] (5.center) -- (6.center);
            \node at (1, -5.5) {\scr{\text{the highest path in $\Pa_{3,0}$}}};
	\end{tikzpicture}
	\\
	&		\begin{tikzpicture}[scale=0.35, baseline=(current  bounding  box.center)]
            \draw[help lines, color=gray!30, dashed] (-3.1,-5.1) grid (3.1, 3.5); 
            %
            \node at (-3, 3.5) {\scr{$0$}};
            \node at (-2, 3.5) {\scr{$1$}};
            \node at (-1, 3.5) {\scr{$2$}};
            \node at (0, 3.5) {\scr{$3$}};
            \node at (1, 3.5) {\scr{$4$}};
            \node at (2, 3.5) {\scr{$5$}};
            \node at (3, 3.5) {\scr{$6$}};
            %
            \node at (-3.7, 2) {\scr{$0$}};
            \node at (-3.7, 1) {\scr{$1$}};
            \node at (-3.7, 0) {\scr{$2$}};
            \node at (-3.7, -1) {\scr{$3$}};
            \node at (-3.7, -2) {\scr{$4$}};
            \node at (-3.7, -3) {\scr{$5$}};
            \node at (-3.7, -4) {\scr{$6$}};
            %
            \node (0) at (-3,   1) {$\bullet$};
            \node (1) at (-2,   0) {$\bullet$};
            \node (2) at (-1,   -1) {$\bullet$};
            \node (3) at ( 0,   -2) {$\bullet$};
            \node (4) at ( 1,   -3) {$\bullet$};
            \node (5) at ( 2,   -4) {\blue{$\bullet$}};
            \node (6) at ( 3,   -3) {$\bullet$};
            %
            \draw[draw=lightgray] (0.center) -- (1.center);
            \draw[draw=lightgray] (1.center) -- (2.center);
            \draw[draw=lightgray] (2.center) -- (3.center);
            \draw[draw=lightgray] (3.center) -- (4.center);
            \draw[draw=lightgray] (4.center) -- (5.center);
            \draw[draw=lightgray] (5.center) -- (6.center);
            \node at (0.8, -5.5) {\scr{\text{the lowest path in $\Pa_{1,0}$}}};
	\end{tikzpicture}
	\qquad\,\,
			\begin{tikzpicture}[scale=0.35, baseline=(current  bounding  box.center)]
            \draw[help lines, color=gray!30, dashed] (-3.1,-5.1) grid (3.1, 3.5); 
            %
            \node at (-3, 3.5) {\scr{$0$}};
            \node at (-2, 3.5) {\scr{$1$}};
            \node at (-1, 3.5) {\scr{$2$}};
            \node at (0, 3.5) {\scr{$3$}};
            \node at (1, 3.5) {\scr{$4$}};
            \node at (2, 3.5) {\scr{$5$}};
            \node at (3, 3.5) {\scr{$6$}};
            %
            \node at (-3.7, 2) {\scr{$1$}};
            \node at (-3.7, 1) {\scr{$2$}};
            \node at (-3.7, 0) {\scr{$3$}};
            \node at (-3.7, -1) {\scr{$4$}};
            \node at (-3.7, -2) {\scr{$5$}};
            \node at (-3.7, -3) {\scr{$6$}};
            \node at (-3.7, -4) {\scr{$7$}};
            %
            \node (0) at (-3,   0) {$\bullet$};
            \node (1) at (-2,   -1) {$\bullet$};
            \node (2) at (-1,   -2) {$\bullet$};
            \node (3) at ( 0,   -3) {$\bullet$};
            \node (4) at ( 1,   -4) {\blue{$\bullet$}};
            \node (5) at ( 2,   -3) {$\bullet$};
            \node (6) at ( 3,   -2) {$\bullet$};
            %
            \draw[draw=lightgray] (0.center) -- (1.center);
            \draw[draw=lightgray] (1.center) -- (2.center);
            \draw[draw=lightgray] (2.center) -- (3.center);
            \draw[draw=lightgray] (3.center) -- (4.center);
            \draw[draw=lightgray] (4.center) -- (5.center);
            \draw[draw=lightgray] (5.center) -- (6.center);
            \node at (0.8, -5.5) {\scr{\text{the lowest path in $\Pa_{2,1}$}}};
	\end{tikzpicture}
	\qquad\,\,
			\begin{tikzpicture}[scale=0.35, baseline=(current  bounding  box.center)]
            \draw[help lines, color=gray!30, dashed] (-3.1,-5.1) grid (3.1, 3.5); 
            %
            \node at (-3, 3.5) {\scr{$0$}};
            \node at (-2, 3.5) {\scr{$1$}};
            \node at (-1, 3.5) {\scr{$2$}};
            \node at (0, 3.5) {\scr{$3$}};
            \node at (1, 3.5) {\scr{$4$}};
            \node at (2, 3.5) {\scr{$5$}};
            \node at (3, 3.5) {\scr{$6$}};
            %
            \node at (-3.7, 2) {\scr{$0$}};
            \node at (-3.7, 1) {\scr{$1$}};
            \node at (-3.7, 0) {\scr{$2$}};
            \node at (-3.7, -1) {\scr{$3$}};
            \node at (-3.7, -2) {\scr{$4$}};
            \node at (-3.7, -3) {\scr{$5$}};
            \node at (-3.7, -4) {\scr{$6$}};
            %
            %
            \node (0) at (-3,   -1) {$\bullet$};
            \node (1) at (-2,   -2) {$\bullet$};
            \node (2) at (-1,   -3) {$\bullet$};
            \node (3) at ( 0,   -4) {\blue{$\bullet$}};
            \node (4) at ( 1,   -3) {$\bullet$};
            \node (5) at ( 2,   -2) {$\bullet$};
            \node (6) at ( 3,   -1) {$\bullet$};
            %
            \draw[draw=lightgray] (0.center) -- (1.center);
            \draw[draw=lightgray] (1.center) -- (2.center);
            \draw[draw=lightgray] (2.center) -- (3.center);
            \draw[draw=lightgray] (3.center) -- (4.center);
            \draw[draw=lightgray] (4.center) -- (5.center);
            \draw[draw=lightgray] (5.center) -- (6.center);
            \node at (0.8, -5.5) {\scr{\text{the lowest path in $\Pa_{3,0}$}}};
	\end{tikzpicture}
\end{split}
\end{equation*}
Here an upper (resp.~lower) corner is marked as a red (resp.~blue) bullet.
}
\end{ex}

\section{Quantum affine algebras and $q$-characters} \label{sec:q-chars}
\subsection{Cartan data}
Let $C = (a_{ij})_{i, j \in I}$ be a Cartan matrix of finite type, where $I = \{ \, 1, 2, \dots, n \, \}$.
Since $C$ is symmetrizable, there exists a diagonal matrix $D = (d_i)_{i \in I}$ such that $DC$ is non-zero symmetric and $d_i \in \Z_+$ for all $i \in I$. 
Let $q \in \C^\times$ be not a root of unity. Then we set $q_i = q^{d_i}$.
Let us take a realization $(\h, \Pi, \Pi^\vee)$ over $\C$ with respect to $C$, where $\Pi = \{ \alpha_1, \dots, \alpha_n \} \subset \h$ (set of simple roots) and $\Pi^\vee = \{ \, \alpha_1^\vee, \dots, \alpha_n^\vee \, \} \subset \h^*$ (set of simple coroots) so that $\alpha_j(\alpha_i^\vee) = a_{ij}$ for $i, j \in I$.
We denote by $P = \{ \, \lambda \in \h^* \, | \, \lambda(\alpha_i^\vee) \in \Z \,\, \text{for all $i \in I$} \, \}$ the set of weights and by $P^+ = \{ \, \lambda \in P \, | \, \lambda(\alpha_i^\vee)  \ge 0 \,\, \text{for all $i \in I$} \, \}$ the set of dominant weights.
Let $\varpi_1, \dots, \varpi_n \in \h^*$ (resp. $\varpi_1^\vee, \dots, \varpi_n^\vee \in \h$) be the fundamental weights (resp. coweights) defined by $\varpi_i(\alpha_j^\vee) = \alpha_j(\varpi_i^\vee) = \delta_{i,j}$ for $i, j \in I$.
Let $Q$ (resp. $Q^+$) be the $\Z$-span (resp. $\Z_+$-span) of $\Pi$.
We take the partial order $\le$ on $P$ with respect to $Q^+$ in which $\lambda \le \lambda'$ if and only if $\lambda' - \lambda \in Q^+$ for $\lambda, \lambda' \in P$.
We denote by $\g$ the  complex simple Lie algebra with respect to $C$.
Let $\wh{\g}$ be the {\it untwisted affine algebra} corresponding to $\g$ \cite{Kac}.

\subsection{Quantum affine algebras}
The {\em quantum affine algebra} $U_q(\wh{\g})$ is the associative algebra over $\C$ generated by $e_i$, $f_i$, and $k^{\pm 1}_i$ for $0 \le i \le n$, and $C^{\pm \frac{1}{2}}$ subject to the following relations:
{\allowdisplaybreaks
\begin{gather*}
	\text{$C^{\pm \frac{1}{2}}$ are central with $C^{\frac{1}{2}}C^{-\frac{1}{2}}=C^{-\frac{1}{2}}C^{\frac{1}{2}} = 1$,} \\
	k_ik_j=k_jk_i,\quad k_ik_i^{-1}=k_i^{-1}k_i=1,\quad \prod_{i=0}^n k_i^{\pm a_i} = ( C^{\pm\frac{1}{2}} )^2, \\
	k_ie_jk_i^{-1}=\qi^{a_{ij}}e_j,\quad k_if_jk_i^{-1}=\qi^{-a_{ij}}f_j,\\
	e_if_j-f_je_i=\delta_{ij}\frac{k_i-k_i^{-1}}{\qi-\qi^{-1}},\\
	\sum_{m=0}^{1-a_{ij}}(-1)^{m}e_i^{(1-a_{ij}-m)}e_je_i^{(m)}=0,\quad\quad
	\sum_{m=0}^{1-a_{ij}}(-1)^{m}f_i^{(1-a_{ij}-m)}f_jf_i^{(m)}=0\quad(i\ne j),
\end{gather*}
}
\noindent \!\!for $0 \le i,j \le n$, where $e_i^{(m)}=e_i^m/[m]_i!$ and $f_i^{(m)}=f_i^m/[m]_i!$ for $0 \le i \le n$ and $m \in \Z_+$.
There is a Hopf algebra structure on $U_q(\wh{\g})$, where the comultiplication $\Delta$ and the antipode $S$ are given by
\begin{gather*}
	\Delta(k_i)=k_i \ot k_i, \quad
	\Delta(e_i)= e_i \ot 1 + k_i\ot e_i, \quad
	\Delta(f_i)= f_i \ot k_i^{-1} + 1 \ot f_i,\\
	S(k_i)=k_i^{-1}, \ \ S(e_i)=-k_i^{-1} e_i, \ \  S(f_i)=-f_i k_i,
\end{gather*}
for $0 \le i \le n$.
It is well-known in \cite{B94} that as a Hopf algebra, $U_q(\wh{\g})$ is also isomorphic to the algebra generated by
$x_{i,r}^\pm$ ($i\in I, r\in \Z$), $k_{i}^{\pm 1}$ $(i\in I)$, $h_{i,r}$ ($i\in I, r\in \Z^\times$), and $C^{\pm \frac{1}{2}}$ subject to the following relations:
{\allowdisplaybreaks
\begin{gather*}
	\text{$C^{\pm \frac{1}{2}}$ are central with $C^{\frac{1}{2}}C^{-\frac{1}{2}}=C^{-\frac{1}{2}}C^{\frac{1}{2}} = 1$,} \\
	k_ik_j=k_jk_i,\quad k_ik_i^{-1}=k_i^{-1}k_i=1,\\
	k_ih_{j,r}=h_{j,r}k_i,\quad k_ix^{\pm}_{j,r}k_i^{-1}=\qi^{\pm a_{ij}}x^{\pm}_{j,r},\\
	[h_{i,r},h_{j,s}]=\delta_{r,-s}\frac{1}{r}[r a_{ij}]_i\frac{C^r-C^{-r}}{\qi-\qi^{-1}},\\
	[h_{i,r},x^{\pm}_{j,s}]=\pm \frac{1}{r}[r a_{ij}]_i C^{\mp |r|/2}x^{\pm}_{j,r+s},\\
	x^\pm_{i,r+1}x^\pm_{j,s}-\qi^{\pm a_{ij}}x^\pm_{j,s}x^\pm_{i,r+1}
	= \qi^{\pm a_{ij}}x^\pm_{i,r}x^\pm_{j,s+1}-x^\pm_{j,s+1}x^\pm_{i,r},\\
	[x^+_{i,r},x^-_{j,s}]=\delta_{i,j}\frac{C^{(r-s)/2}\psi^+_{i,r+s}-C^{-(r-s)/2}\psi^-_{i,r+s}}{\qi-\qi^{-1}},\\
	\sum_{w\in \mf{S}_m}\sum_{k=0}^m {\small \left[\begin{matrix} m \\ k \end{matrix}\right]}_i x^\pm_{i,r_{w(1)}}\dots x^\pm_{i,r_{w(k)}}x^\pm_{j,s}x^\pm_{i,r_{w(k+1)}}\dots x^\pm_{i,r_{w(m)}}=0\quad (i\neq j),
\end{gather*}
}
\noindent \!\!where $r_1, \dots, r_m$ is any sequence of integers with $m=1-a_{ij}$, $\mf{S}_m$ denotes the group of permutations on $m$ letters, and $\psi^\pm_{i,r}$ is the element determined by the following identity of formal power series in $z$;
\begin{equation}\label{eq:psi generators}
	\sum_{r=0}^\infty \psi^\pm_{i,\pm r} z^{\pm r} = k_i^{\pm 1}\exp\left( \pm(\qi-\qi^{-1}) \sum_{s=1}^\infty h_{i,\pm s} z^{\pm s} \right).
\end{equation}
Here $\psi_{i, r}^+ = 0$ for $r < 0$, and $\psi_{i, r}^- = 0$ for $r > 0$.

\subsection{Finite-dimensional modules and $q$-characters}

Let $\mathcal{C}$ be the category of finite-dimensional $U_q(\wh{\g})$-modules. We denote by $\fc$ the Grothendieck ring for the category $\mathcal{C}$.
A finite-dimensional $U_q(\g)$-module $W$ is said to be of type $1$ if it admits the direct sum of its weight spaces of the following form:
\begin{equation*}
	W_\lambda = \left\{ \, w \in W \, | \, k_i w = q^{(\lambda, \alpha_i)} w \, \right\} \quad (\lambda \in P).
\end{equation*}
A finite-dimensional $U_q(\wh{\g})$-module $V$ is {\it of type $1$} if $C^{\pm \frac{1}{2}}$ acts as identity on $V$ and $V$ is of type $1$ as a $U_q(\g)$-module. 
According to \cite{CP94}, every finite-dimensional simple $U_q(\wh{\g})$-module can be obtained from a finite-dimensional $U_q(\wh{\g})$-module of type $1$ by twisting the actions of $U_q(\wh{\g})$ under an automorphism of $U_q(\wh{\g})$.
In what follows, all $U_q(\wh{\g})$-modules in this paper will be assumed to be of type 1 without further comment.

For a $U_q(\wh{\g})$-module $V$ in $\mathcal{C}$, since $\left\{ k_i^\pm, \, \psi_{i,\pm r}^\pm \, | \, i \in I,\, r \ge 0 \right\}$ is a commuting family as endomorphisms of $V$, we have
\begin{equation*}
    V = \bigoplus_{\Phi = (\phi_{i, \pm r}^{\pm})_{i \in I, r \ge 0}} V_\Phi,
\end{equation*}
where $V_\Phi = \left\{ v \in V \, | \, \text{$\exists p\ge 0$ such that $\left( \psi_{i,\pm r}^\pm - \phi_{i,\pm r}^\pm \right)^p v = 0$ for $i \in I$ and $r \ge 0$} \right\}$.
We call $\Phi$ a {\it $\ell$-weight} of $V$.
We often identify $\Phi$ with the sequence $(\Phi_i(z))_{i \in I}$, where 
\begin{equation*}
    \Phi_i(z) = \sum_{r \ge 0} \phi_{i, \pm r}^{\pm} z^{\pm r} \in \C[[z^{\pm 1}]].
\end{equation*}
It is known in \cite{FR99} that $\Phi_i(z)$ is of the following form in $\C[[z^{\pm 1}]]$:
\begin{equation} \label{eq:l-weight formula}
    \Phi_i(z) = q_i^{{\rm deg}(Q_i) - {\rm deg}(R_i)} \frac{Q_i(zq_i^{-1})R_i(zq_i)}{Q_i(zq_i)R_i(zq_i^{-1})},
\end{equation}
where $Q_i(z) = \prod_{a \in \C^\times} (1-az)^{r_{i,a}}$ and $R_i(z) = \prod_{a \in \C^\times} (1-az)^{s_{i,a}}$ for some $r_{i, a}, s_{i,a} \in \Z_+$.

The {\it $q$-character $\chi_q$} is an injective morphism from $\fc$ into the ring of Laurent polynomials of $Y_{i,a}$ $(i \in I, a \in \C^\times)$ \cite{FR99}:
\begin{equation} \label{eq:q-char morphism}
\begin{split}
    \chi_q : 
    \xymatrixcolsep{3pc}\xymatrixrowsep{0pc}
    \xymatrix{
  \fc \ \ar@{->}[r] & \Z[Y_{i,a}^{\pm 1}]_{i \in I, a \in \C^\times} \\
 [V] \ar@{|->}[r] & \displaystyle \sum_{\Phi} \dim ( V_{\Phi} ) \, m_\Phi
},
\end{split}
\end{equation}
where $m_{\Phi} = \prod_{i \in I, a \in \C^\times} Y_{i,a}^{r_{i,a}-s_{i,a}}$ with respect to $\ell$-weight $\Phi$ of the form \eqref{eq:l-weight formula}. 
The $m_\Phi$ are called {\it monomials} or {\it $\ell$-weights} associated to $\ell$-weights $\Phi$.
We denote by $\mcM$ the set of all monomials in $\Z[Y_{i,a}^{\pm 1}]_{i \in I, a \in \C^\times}$.
Let $J$ be a subset of $I$. 
For $m = \prod_{i \in I, a \in \C^\times} Y_{i, a}^{u_{i,a}(m)} \in \mcM$, the monomial $m$ is called {\it $J$-dominant} if $u_{j, a}(m) \ge 0$ for all $j \in J$. When $J = I$, we call $m$ {\it dominant monomial}.
Put $\mcM^+$ as the subset of $\mcM$ consisting of dominant monomials.

A finite-dimensional $U_q(\wh{\g})$-module $V$ is {\it of highest $\ell$-weight $m = m_{\Phi}$} if there exists a non-zero vector $v \in V$ such that 
\begin{center}
	\!\!\!\!
    (1) $V = U_q(\wh{\g})v$, \,
    (2) $e_i v = 0$ for all $i \in I$, \,
    (3) $\psi_{i, \pm r}^{\pm} v = \phi_{i,\pm r}^{\pm}v$ for $i \in I$, $r \ge 0$.
\end{center}
Under Chari--Pressley's classification \cite{CP95} of finite-dimensional simple $U_q(\wh{\g})$-modules, for $m \in \mcM^+$, there exists a unique finite-dimensional $U_q(\wh{\g})$-module of highest $\ell$-weight $m$, which is denoted by $L(m)$.
In the case of $m = Y_{i,a}$ for $i \in I$ and $a \in \C^\times$, we say that $L(Y_{i,a})$ is a {\it fundamental module}.

\begin{rem} \label{rem:for fundamental modules}
{\em 
Since any simple module in $\mathcal{C}$ is a subquotient of a tensor product of fundamental modules \cite{CP94,CP95}, the fundamental modules can be viewed as building blocks of $\mathcal{C}$. 
On the other hand, it is known that $\mathcal{C}$ is not semisimple (which is observed easily even in the case of $U_q(\widehat{\mathfrak{sl}}_2)$), and the tensor product of simple $U_q(\widehat{\g})$-modules in $\mathcal{C}$ still remains simple generically (e.g.~see \cite{AK97,FM01,Kas02}).
These leads to introduce a notion of prime object in $\mathcal{C}$ \cite{CP97}, namely,
we say that an simple module $S$ of $\mathcal{C}$ is {\it prime} if there exists no nontrivial factorization $S = S_1 \otimes S_2$, where $S_1$ and $S_2$ are (simple) modules of $\mathcal{C}$. 
One of goals in representation theory of $U_q(\widehat{\g})$ is to classify all prime simple modules in $\mathcal{C}$ and understand their tensor product structure. Note that the fundamental modules are prime simple modules.
}
\end{rem}

For $a \in \mathbb{C}^\times$, there exists a Hopf algebra automorphism of $U_q(\widehat{\g})$, denoted by $\tau_a$, defined by
\begin{equation*}
	\tau_a(x_{i,r}^\pm) = a^n x_{i,r}^\pm, \quad
	\tau_a(\psi_{i,r}^\pm) = a^n \psi_{i,r}^\pm, \quad
	\tau_a\left(C^{1/2}\right) = C^{1/2}, \quad
	\tau_a(k_i) = k_i.
\end{equation*}
We denote by $V(a)$ the pull-back of a module $V$ in $\mathcal{C}$ under $\tau_a$. 
It is known by \cite{CP94,CP95} (and also Theorem \ref{thm:prop1 of qchar} and \ref{thm:Image of qchar is the intersection of kernels of Si}) that the spectral parameters of all (dominant) monomials appearing in $\chi_q(V)$ can be shifted by $\tau_a$. 
This is the reason why we only consider $\chi_q\left(L(Y_{i,q^k})\right)$ $(i \in I, k \in \Z_+)$ throughout this paper (see also Remark \ref{rem:skeleton category Cz}).

\begin{rem} \label{rem:skeleton category Cz}
{\em 
Let $\mathcal{C}_\Z$ be the full subcategory of $\mathcal{C}$ whose objects $V$ satisfy that every composition factor of $V$ satisfies certain integral condition (see \cite[Section 3.7]{HL10} for more details). Then it is known (e.g.~see \cite{CP94, Kas02, Chari02}) that every simple module in $\mathcal{C}$ can be written as a tensor product $S_1(a_1) \otimes \dots S_k(a_k)$ for some simple modules $S_1, \dots, S_k \in \mathcal{C}_\Z$ and some $a_1, \dots, a_k \in \mathbb{C}$ such that $a_i / a_j \notin q^{2\Z}$ for $1 \le i < j \le k$.
Clearly, $L(Y_{i,q^k})$ $(i \in I, k \in \Z)$ is an object of $\mathcal{C}_\Z$.
}
\end{rem}

\subsection{Properties of $q$-characters}
Let us briefly review properties of $q$-character homomorphism \eqref{eq:q-char morphism} following \cite{FR99, FM01}.
For $m \in \mcM^+$, we denote by $\mcM(m)$ the set of all monomials in $\chi_q(L(m))$.
For $i \in I$ and $a \in \C^\times$, let
\begin{equation} \label{eq:variable A}
    A_{i,a} = Y_{i, aq_i^{-1}} Y_{i, aq_i} \prod_{a_{ji} = -1} Y_{j, a}^{-1} \prod_{a_{ji}=-2} Y_{j, aq^{-1}}^{-1} Y_{j, aq}^{-1} \prod_{a_{ji}=-3} Y_{j, aq^{-2}} Y_{j, a} Y_{j, aq^2}.
\end{equation}

\begin{thm}\!\!\!{\em \cite{FM01}} \label{thm:prop1 of qchar}
For $m \in \mcM^+$, we have
\begin{equation*}
    \mcM(m) \subset m \cdot \Z[A_{i, a}^{-1}]_{i \in I, a \in \C^\times}.
\end{equation*}
\end{thm}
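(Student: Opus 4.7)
The statement asserts that every monomial $m' \in \mcM(m)$ is obtained from $m$ by multiplication by $\prod A_{i,a}^{-c_{i,a}}$ with non-negative exponents $c_{i,a}$. My plan is a rank-one reduction combined with induction on the classical weight. Throughout, it is convenient to use the partial order $\preceq$ on monomials defined by $m_1 \preceq m_2$ iff $m_2/m_1 \in \Z_+[A_{i,a}]_{(i,a) \in I \times \C^\times}$; the goal is to show $m' \preceq m$ for every $m' \in \mcM(m)$.

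For each $i \in I$, let $U_{\qi}(\wh{\mathfrak{sl}}_2) \subset U_q(\wh{\g})$ denote the subalgebra generated by $x_{i,r}^\pm$, $h_{i,r}$, $k_i^{\pm 1}$ and $C^{\pm 1/2}$; this is isomorphic to a quantum affine $\wh{\mathfrak{sl}}_2$. The first step is to verify the desired conclusion in this rank-one setting: using the Chari--Pressley classification of finite-dimensional simple $U_{\qi}(\wh{\mathfrak{sl}}_2)$-modules as tensor products of evaluation modules, one checks directly from the explicit $\qi$-character formulae for Kirillov--Reshetikhin modules that every monomial in the $\qi$-character of a simple $U_{\qi}(\wh{\mathfrak{sl}}_2)$-module with highest monomial $M = \prod_k Y_{i,a_k}$ equals $M$ times a product of $A_{i,b}^{-1} = (Y_{i,b\qi^{-1}} Y_{i,b\qi})^{-1}$ with non-negative exponents.

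Next I would run the main induction on $\preceq$. Given $m' \in \mcM(m)$ with $m' \ne m$, since $L(m)$ is simple with $m$ as its unique dominant $\ell$-weight, $m'$ fails to be $\{i\}$-dominant for some $i \in I$, so $u_{i,a}(m') < 0$ for some $a \in \C^\times$. Restricting $L(m)$ to $U_{\qi}(\wh{\mathfrak{sl}}_2)$ and decomposing it into simple $U_{\qi}(\wh{\mathfrak{sl}}_2)$-summands, the $\ell$-weight space corresponding to $m'$ lies in one such summand. Applying the rank-one result from the previous paragraph to that summand, together with the negativity condition on $u_{i,a}(m')$, produces another $\ell$-weight of $L(m)$ whose monomial has the form $m' \cdot A_{i,b}$ for some $b$ related to $a$ (with $b \in \{a\qi, a\qi^{-1}\}$). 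This new monomial is strictly $\succ m'$, so by induction it lies in $m \cdot \Z_+[A_{j,c}^{-1}]$; dividing by $A_{i,b}$ recovers the required form for $m'$.

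The main obstacle is the lifting step: ensuring that a raising move detected inside the $\qi$-character of a rank-one summand actually corresponds to an honest $\ell$-weight of $L(m)$, rather than being an artifact of the projection $Y_{j,c} \mapsto 1$ for $j \ne i$ which defines the $\qi$-character restriction. Distinct $\ell$-weights of $L(m)$ can collapse to the same $\qi$-monomial, so multiplicities must be tracked carefully when matching monomials of $\chi_q(L(m))$ to monomials appearing in the rank-one $\qi$-character of the summand that contains $m'$. I expect this to be handled either by a direct argument about how $\ell$-weight spaces of $V$ group into $U_{\qi}(\wh{\mathfrak{sl}}_2)$-isotypic components, or more cleanly by invoking the characterization of $\mathrm{Im}(\chi_q)$ as the intersection of kernels of screening operators $S_i$ (which is likely the next theorem in the paper, as suggested by the forward reference to Theorem \ref{thm:Image of qchar is the intersection of kernels of Si}). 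Either route makes the raising step rigorous and completes the induction.
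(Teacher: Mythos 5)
The paper offers no proof of this statement: it is imported verbatim from Frenkel--Mukhin \cite{FM01}, so there is no internal argument to compare yours against. Your overall strategy (rank-one verification for $U_{\qi}(\widehat{\mathfrak{sl}}_2)$, then an induction that climbs from an arbitrary monomial back up to $m$ by raising moves inside rank-one blocks) is indeed the shape of the argument in \cite{FM01}, and the difficulty you flag in your last paragraph --- making the rank-one raising step correspond to honest $\ell$-weights, e.g.\ via the $\ker S_i$ description --- is a real one and is handled there essentially as you suggest.

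There is, however, a genuine gap at the start of your induction step: ``since $L(m)$ is simple with $m$ as its unique dominant $\ell$-weight, $m'$ fails to be $\{i\}$-dominant for some $i$.'' Simplicity of $L(m)$ does not imply that $m$ is the unique dominant monomial of $\chi_q(L(m))$; that is the property of being \emph{special}, which fails for general $m \in \mcM^+$ (the paper itself records as a nontrivial fact that \emph{fundamental} modules are special, and non-special simple modules are exactly why the Frenkel--Mukhin algorithm can fail). For a non-special $L(m)$ with a second dominant monomial $m' \neq m$, your induction stalls: there is no $i$ with $u_{i,a}(m')<0$, so no raising move is produced. What you actually need is the weaker and different assertion that $m'$ is not the \emph{top} monomial of its $U_{\qi}(\widehat{\mathfrak{sl}}_2)$-block for some $i$ (a monomial can be $i$-dominant without being the top of its block), and this cannot be extracted from the combinatorics of dominance alone. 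In \cite{FM01} it comes from representation theory: since the classical weight of $m'$ is strictly below that of $m$, if every vector of the $\ell$-weight space attached to $m'$ were a $U_{\qi}(\widehat{\mathfrak{sl}}_2)$-highest weight vector for every $i$, it would be killed by all $x_{i,r}^+$ and would generate a proper highest weight submodule, contradicting irreducibility. With that replacement (and with the block decomposition taken at the level of $q$-characters in the Grothendieck group rather than as an actual direct sum of the restricted module), your induction on the classical weight closes; the rank-one computation itself is fine as stated.
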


\noindent
Let $\mcY = \Z[Y_{i, a}]_{i \in I, a \in \C^\times}$. 
For $i \in I$, we denote by $\wtd{\mcY}_i$ the free $\mcY$-module generated by $S_{i,x}$ for $x \in \C^\times$. Then, we define $\mcY_i$ by the quotient of $\wtd{\mcY}_i$ by relations $S_{i,xq_i^2} = A_{i,xq_i}S_{i,x}$ for $x \in \C^\times$.
Note that $\wtd{\mcY}_i$ is also a free $\mcY$-module.
For $i \in I$, we define a linear map $\wtd{S}_i : \mcY \longrightarrow \wtd{\mcY}_i$ by
\begin{equation*}
    \wtd{S}_i :
    \xymatrixcolsep{3pc}\xymatrixrowsep{0pc}
    \xymatrix{
    \mcY \ \ar@{->}[r] & \wtd{\mcY}_i \\
    Y_{j, a} \ar@{|->}[r] & \delta_{ij} Y_{i, a} S_{i, a}
},
\end{equation*}
and extending to the whole ring $\mcY$ via the Leibniz rule $\wtd{S}_i (rs) = s \wtd{S}_i(r) + r \wtd{S}_i(s)$ for $r, s \in \mcY$.
Note that one can check that $\wtd{S}_i(Y_{j,a}^{-1}) = -\delta_{ij} Y_{i,a}^{-1}S_{i,a}$.

Let $S_i : \mcY \rightarrow \mcY_i$ be the composition of $\wtd{S}_i$ and the canonical projection $\wtd{\mcY}_i \rightarrow \mcY_i$, which is called the {\it $i$-th screening operator}. Then the image of $q$-character homomorphism is given as follows: 

\begin{thm}\!\!\!{\em \cite{FR99, FM01, He04}} \label{thm:Image of qchar is the intersection of kernels of Si}
We have
\begin{equation*}
    {\rm Im} (\chi_q) = \bigcap_{i \in I} \, {\rm ker} \, (S_i),
\end{equation*}
where ${\rm ker}\, (S_i)$ is the kernel of $S_i$ realized as
\begin{equation} \label{eq:description of ker Si}
    {\rm ker}\, (S_i) = \Z[Y_{j,a}]_{j \neq i, a \in \C^\times} \otimes \Z[Y_{i,b} + Y_{i,b}A_{i,bq_i}^{-1}]_{b \in \C^\times}.
\end{equation}
In particular, a non-zero element in ${\rm Im} (\chi_q)$ has at least one dominant monomial.
\end{thm}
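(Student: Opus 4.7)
The plan is to establish the three assertions of the theorem in sequence: first the explicit description of $\ker(S_i)$, then both containments of $\mathrm{Im}(\chi_q) = \bigcap_i \ker(S_i)$, and finally the dominant-monomial consequence as a corollary. Since this is a classical result due to \cite{FR99, FM01, He04}, I will only sketch the architecture, leaving explicit verifications aside.

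\textbf{Step 1 (explicit form of $\ker(S_i)$).} First I would verify by direct Leibniz calculation that the generators in \eqref{eq:description of ker Si} lie in the kernel. For $j \neq i$ one has $\wtd{S}_i(Y_{j,a}) = 0$ by definition, so any polynomial in the $Y_{j,a}$ with $j \neq i$ is already in the kernel. For the ``paired'' generator $Y_{i,b} + Y_{i,b} A_{i,bq_i}^{-1}$, note that from \eqref{eq:variable A} we have $A_{i,bq_i} = Y_{i,b}\, Y_{i,bq_i^2} \cdot (\text{factors involving } Y_{j,\cdot} \text{ with } j \neq i)$, whence $Y_{i,b} A_{i,bq_i}^{-1} = Y_{i,bq_i^2}^{-1} \cdot (\text{factors with } j \neq i)$. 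A short computation using $S_i(Y_{i,b}) = Y_{i,b} S_{i,b}$ and $S_i(Y_{i,bq_i^2}^{-1}) = -Y_{i,bq_i^2}^{-1} S_{i,bq_i^2} = -Y_{i,bq_i^2}^{-1} A_{i,bq_i} S_{i,b}$ (via the relation $S_{i,xq_i^2} = A_{i,xq_i} S_{i,x}$ in $\mcY_i$) yields the desired cancellation. For the reverse containment I would argue by a triangular change of variables: realize $\mcY$ as a Laurent-polynomial ring in the $Y_{j,a}^{\pm 1}$ and replace the ``$i$-column'' variable $Y_{i,b}$ by the paired combination; under this basis $S_i$ becomes a graded linear map whose kernel is exactly the sub-ring spanned by the listed generators.

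\textbf{Step 2 ($\mathrm{Im}(\chi_q) = \bigcap_i \ker(S_i)$).} For the inclusion ``$\subseteq$'' I would reduce to rank one: for $V \in \mathcal{C}$ and $i \in I$, restrict $V$ to the subalgebra $U_{q_i}(\wh{\mathfrak{sl}}_2) \subset U_q(\wh{\g})$ generated by $x^\pm_{i,r}$, $h_{i,r}$, $k_i^{\pm 1}$, treating the $Y_{j,a}$ with $j \neq i$ as spectator scalars under the Leibniz structure of $S_i$. The restricted $q$-character decomposes into $q$-characters of Kirillov--Reshetikhin modules in type $A_1$, each of which is known to be a product of paired generators from Step 1 and hence annihilated by $S_i$. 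For the reverse inclusion ``$\supseteq$'' I would run a Frenkel--Mukhin-style inductive algorithm: equip monomials with the partial order $m' \preceq m$ iff $m\, (m')^{-1} \in \Z_+[A_{i,a}]$, take any $P \in \bigcap_i \ker(S_i)$ and a $\preceq$-maximal monomial $m$ appearing in $P$; show $m$ must be dominant (see Step 3), then subtract $c_m \chi_q(L(m))$ using the existence of $L(m)$ from \cite{CP95} and Theorem \ref{thm:prop1 of qchar} to ensure the difference is again in $\bigcap_i \ker(S_i)$ with strictly fewer maximal monomials, so the induction terminates.

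\textbf{Step 3 (dominant monomial).} The claim in Step 2 that any $\preceq$-maximal monomial of $P \in \bigcap_i \ker(S_i)$ is dominant is proved directly from Step 1: if $u_{j,b}(m) < 0$ for some $j, b$, then the explicit factorization of $\ker(S_j)$ forces $m$ to occur inside a factor of the form $Y_{j,c} + Y_{j,c} A_{j,cq_j}^{-1}$, pairing $m$ with a strictly $\preceq$-larger monomial (obtained by multiplying by $A_{j,cq_j}$), contradicting maximality. This argument uses only membership in $\bigcap_i \ker(S_i)$, so in particular any nonzero element of $\mathrm{Im}(\chi_q)$ has a dominant monomial.

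\textbf{Main obstacle.} The delicate point is the reverse inclusion in Step 2, and more precisely the simultaneous control of all screening operators $S_j$ on a single maximal monomial $m$. Each $S_j$ only sees the $j$-th column, yet one must deduce dominance in every direction $j \in I$ from a single global maximality hypothesis; the Frenkel--Mukhin algorithm propagates this information node by node, but additional care is required in non-simply-laced types (as here in type $C$) because the shift pattern of $A_{i,a}$ in \eqref{eq:variable A} is asymmetric, so the pairing produced by $S_j$ can introduce secondary non-dominances that must themselves be absorbed into the induction.
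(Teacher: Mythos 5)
The paper contains no proof of this statement: it is imported verbatim from \cite{FR99, FM01, He04} (the realization \eqref{eq:description of ker Si} of $\ker(S_i)$ and the characterization of ${\rm Im}(\chi_q)$ are due to Frenkel--Mukhin, with the general case completed by Hernandez), so there is no in-paper argument to compare yours against. Your sketch is a faithful reconstruction of the standard proof: the Leibniz computation in Step 1 is correct (using $A_{i,bq_i}=Y_{i,b}Y_{i,bq_i^2}\cdot(\text{factors with }j\neq i)$ from \eqref{eq:variable A} and the relation $S_{i,bq_i^2}=A_{i,bq_i}S_{i,b}$, the two terms of $\wtd{S}_i\bigl(Y_{i,b}+Y_{i,b}A_{i,bq_i}^{-1}\bigr)$ cancel), the reduction to $U_{q_i}(\wh{\mathfrak{sl}}_2)$ for the inclusion ${\rm Im}(\chi_q)\subseteq\bigcap_i\ker(S_i)$ is exactly the Frenkel--Mukhin route, and the reverse inclusion by iterated subtraction of $\chi_q(L(m))$ for a maximal dominant monomial $m$ (using Theorem \ref{thm:prop1 of qchar} and \cite{CP95}) is the standard algorithm.

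Two points in your outline are thinner than the actual arguments in \cite{FM01} and deserve flagging. First, in Step 3 the monomial $mA_{j,cq_j}$ produced by replacing a factor $Y_{j,c}A_{j,cq_j}^{-1}$ by $Y_{j,c}$ appears in the expansion of one product of generators of $\ker(S_j)$, but a priori it could cancel against contributions from other products, so "pairing $m$ with a strictly larger monomial" does not immediately contradict maximality; one needs the triangularity observation that distinct products of the generators have distinct top monomials, which appear without cancellation. Second, the termination of the subtraction procedure in Step 2 is not just "strictly fewer maximal monomials": one needs that all monomials in play stay in a set on which the partial order has no infinite ascending chains, which follows from Theorem \ref{thm:prop1 of qchar} together with a weight argument. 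Both points are resolved in the cited references, so your architecture is sound as a sketch of a classical theorem, but a self-contained proof would have to supply them.
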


\section{Path description for $q$-characters of fundamental modules} \label{sec:path description}
\noindent
From now on, we assume that $\g$ is of type $C_n$. Here we use the simple roots that are numbered as shown on the following Dynkin diagram:
\begin{equation*}
    \begin{split}
    \begin{tikzpicture}[scale=0.5]
    \draw (-1,0) node[anchor=east] {\footnotesize $C_{n}$ : };
    \draw (0 cm,0) -- (3.5 cm,0);
    \draw (4.5 cm,0) -- (6 cm,0);
    \draw[dotted] (3.5 cm, 0) -- (4.5 cm, 0);
    \draw (6 cm, 0.1 cm) -- +(2 cm,0);
    \draw (6 cm, -0.1 cm) -- +(2 cm,0);
    \draw[shift={(6.8, 0)}, rotate=180] (135 : 0.45cm) -- (0,0) -- (-135 : 0.45cm);
    \draw[fill=white] (0 cm, 0 cm) circle (.25cm) node[below=4pt]{\scr{$1$}};
    \draw[fill=white] (2 cm, 0 cm) circle (.25cm) node[below=4pt]{\scr{$2$}};
    \draw[fill=white] (6 cm, 0 cm) circle (.25cm) node[below=4pt]{\scr{$n-1$}};
    \draw[fill=white] (8 cm, 0 cm) circle (.25cm) node[below=4pt]{\scr{$n$}};
    \end{tikzpicture}
    \end{split}
\end{equation*}

\subsection{Monomials of paths}
By abuse of notation, we write 
\begin{equation} \label{eq:Y and A}
    Y_{i,k} = Y_{i,q^k}, \quad 
    A_{i,k} = A_{i,q^k},
\end{equation}
for $i \in I$ and $k \in \Z_+$ (recall \eqref{eq:variable A}).

We define a map $\m $ from $\Pa $ into $\Z[Y_{\ov{j},k}^{\pm 1}]_{(j,k) \in \X}$ to associate a path with a monomial as follows:
\begin{equation} \label{eq:monomial map}
\begin{split}
    \xymatrixcolsep{2pc}\xymatrixrowsep{0pc}
    \xymatrix{
        \mathsf{m}  : \Pa  \ar@{->}[r] & \displaystyle \mathbb{Z}\left[Y_{\ov{j}, \ell}^{\pm 1}\right]_{(j, \ell) \in \X} \\
        \qquad p \ar@{|->}[r] & \displaystyle \prod_{(j, \ell) \in C_{p,\!+} } Y_{\ov{j},\ell+2\delta(j > n)} \prod_{(j, \ell) \in C_{p,\!-} } Y_{\ov{j}, \ell+2\delta(j \ge n)}^{-1}  \hspace{-1.5cm}
    }
\end{split}
\end{equation}

\subsubsection{Admissible paths}
Let us define a subset $\ov{\Pa}_{i,k} $ of $\Pa_{i,k} $ by 
\begin{equation} \label{eq:admissible paths}
	\ov{\Pa}_{i,k}  =
    \left\{\, 
	(j, \ell_j)_{j \in \I} \in \Pa_{i,k}  \, | \, \ell_j \le \ell_{N-j} \, \text{\,for $j \in I$} 
	\,\right\}.
\end{equation}
We call a path in $\ov{\Pa}_{i,k}$ an {\it admissible path} (of type C).

\begin{lem} \label{lem:unqiue dominant monomial}
    There exists a unique dominant $($resp.~antidominant$)$ monomial $Y_{i,k}$ $($resp.~$Y_{i,k+2n}^{-1}$$)$ in $\m\left(\ov{\Pa}_{i,k}\right)$.
\end{lem}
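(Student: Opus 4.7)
The plan is to characterize admissible paths whose monomials are dominant (resp.~antidominant) and then invoke Lemma \ref{lem:highest and lowest paths} for uniqueness. The key claim is that for $p \in \ov{\Pa}_{i,k}$, the monomial $\m(p)$ is dominant if and only if $p$ has no lower corners (so $p$ is the highest path), and antidominant if and only if $p$ has no upper corners (so $p$ is the lowest path).

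First, I would unpack the definition of $\m$ in \eqref{eq:monomial map}: each upper corner $(j,\ell) \in C_{p,+}$ contributes exactly one positive factor $Y_{\ov{j},\, \ell + 2\delta(j > n)}$, while each lower corner $(j', \ell') \in C_{p,-}$ contributes exactly one negative factor $Y_{\ov{j'},\, \ell' + 2\delta(j' \ge n)}^{-1}$. In particular, $\m(p)$ is dominant iff every such negative factor is cancelled by a matching positive factor, which requires an upper/lower corner pair with $\ov{j} = \ov{j'}$ and equal shifted $\ell$-values.

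The heart of the argument is to rule out such cancellation when $p$ is admissible. A single position cannot be both an upper and a lower corner, so $\ov{j} = \ov{j'}$ forces $j + j' = N$ with $j \neq j'$. Splitting into the cases $j < n$ (so $j' = N-j > n$), $j = n$ (which gives $j = j'$, absurd), and $j > n$ (so $j' < n$), the equation $\ell + 2\delta(j > n) = \ell' + 2\delta(j' \ge n)$ reduces in each nontrivial case to $y_j - y_{N-j} = \pm 2$. In both such cases, the smaller of $\{j, N-j\}$ lies in $I$, and admissibility applied at that index, $\ell_{\min} \le \ell_{N-\min}$, directly contradicts a displacement of $\pm 2$. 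Hence for admissible $p$ every lower (resp.~upper) corner contributes a surviving negative (resp.~positive) factor, so $\m(p)$ is dominant (resp.~antidominant) only if $p$ has no lower (resp.~upper) corners.

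By Lemma \ref{lem:highest and lowest paths}, the highest and lowest paths in $\Pa_{i,k}$ are unique, so it only remains to check their admissibility and compute their monomials. Admissibility is immediate from the explicit $y_r$ formulas in the proof of Lemma \ref{lem:highest and lowest paths}: in every case the required inequality $y_j \le y_{N-j}$ reduces to $i \le n$ or $j \le n$. The highest path has its unique upper corner at $(i, k)$, giving $\m = Y_{\ov{i},\, k + 2\delta(i > n)} = Y_{i,k}$ since $i \le n$. The lowest path has its unique lower corner at $(N-i, N+k)$, giving $\m^{-1} = Y_{\ov{N-i},\, N + k + 2\delta(N-i \ge n)} = Y_{i,\, k+2n}$ since $\ov{N-i} = i$ for $i \in I$. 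The main obstacle is really the case analysis of the non-cancellation step; once that is established, the rest is bookkeeping.
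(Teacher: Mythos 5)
Your proof is correct and follows essentially the same route as the paper: the crux in both is that the admissibility condition $\ell_j \le \ell_{N-j}$ forbids an upper corner and a lower corner lying over the same node $\ov{j}$ (necessarily at columns $j$ and $N-j$) from contributing cancelling factors, so every lower (resp.~upper) corner leaves a surviving negative (resp.~positive) factor and only the highest (resp.~lowest) path can yield a dominant (resp.~antidominant) monomial; your per-variable organization of this step is, if anything, tighter than the paper's Case~1/Case~2 split, since it directly gives non-dominance of $\m(p)$ rather than just $\m(p)\neq Y_{i,k}$. One arithmetic slip at the very end: $N+k+2\delta(N-i\ge n)=2n+k+2$, not $k+2n$, so your computation actually yields $Y_{i,k+2n+2}^{-1}$ for the lowest path --- which is what the paper's $C_3$ examples confirm (e.g.\ $Y_{1,8}^{-1}$ for $L(Y_{1,0})$ with $n=3$) --- so the off-by-two lies in the lemma's statement rather than in your derivation, and you should not have forced your answer to match it.
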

\begin{proof}
    Let $p \in \ov{\Pa}_{i,k}$. 
    First, if $p$ is the highest (resp.~lowest) path (cf.~Example \ref{ex:highest and lowest corners}), then $\m(p) = Y_{i,k}$ (resp.~$Y_{i,k+2n}^{-1}$).
    Next, we claim that if $p$ is not the highest (resp.~lowest) path, then $\m(p)$ cannot be $Y_{i,k}$ (resp.~$Y_{i,k+2n}^{-1}$).
    We only prove the case that $p$ is not the highest path. 
    The lowest case is similar.
    \smallskip
    
    Let us assume that $p$ is not the highest path. By Remark \ref{rem:upper and lower corners}, there exist corners $(j_1, \ell_1) \in C_{p,+}$ and $(j_2, \ell_2) \in C_{p,-}$. Note that $j_1 \neq j_2$ by definition.
	\smallskip
	
	\noindent
	{\it Case 1}. $\ov{j_1} = \ov{j_2}$.
    In this case, $\m(p)$ contains a monomial as its factor, which is not equal to $1$, given as follows:
    \begin{equation*}
    \begin{cases}
        Y_{\ov{j_1},\ell_1} Y_{\ov{j_2},\ell_2+2}^{-1} & \text{if $j_1 < n < j_2$,} \\
        Y_{\ov{j_1},\ell_1+2} Y_{\ov{j_2},\ell_2}^{-1} & \text{if $j_2 < n < j_1$,}
    \end{cases}
    \end{equation*}
    where $\ell_1 \le \ell_2$ (resp.~$\ell_2 \le \ell_1$) in the first (resp.~second) case by definition of $\ov{\Pa}_{i,k}$.
    Thus the monomial $\m(p)$ cannot be $Y_{i,k}$, since the above factor $(\neq 1)$ cannot be cancelled out by other factors.
    \smallskip
    
    \noindent
    {\it Case 2}. $\ov{j_1} \neq \ov{j_2}$.
    If one can find a pair of $(j_1',\ell_1') \in C_{p,+}$ and $(j_2', \ell_2') \in C_{p,-}$ such that $\ov{j_1'} = \ov{j_2'}$ (possibly, from $(j_1,\ell_1)$ and $(j_2,\ell_2)$), then it is done by {\it Case 1}. Otherwise, $\m(p)$ cannot be $Y_{i,k}$ by definition of $\m$.
\end{proof}

\begin{lem} \label{lem:thin property}
    The induced map $\m|_{\ov{\Pa}_{i,k}}$ from $\m$ is injective. 
\end{lem}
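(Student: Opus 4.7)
The strategy is to recover the corner sets $C_{p,+}$ and $C_{p,-}$ of $p$ from $\m(p)$, so that by Lemma \ref{lem:characterization of paths} the equality $\m(p)=\m(p')$ will force $p=p'$. First I would establish a \emph{thinness} property: for $p \in \ov{\Pa}_{i,k}$ and $(a,b) \in \X$ with $a \in I$, the exponent of $Y_{a,b}$ in $\m(p)$ lies in $\{-1,0,1\}$. Up to four corners of $p$ can contribute to $Y_{a,b}$, namely an upper or lower corner at either the left position $(a,b)$ or the mirror position $(N-a, b-2)$. A left contribution requires $y_a = b$ while a right contribution requires $y_{N-a} = b-2$, and the admissibility inequality $y_a \le y_{N-a}$ together with $b > b-2$ prevents both from occurring at once. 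Since a single position supports at most one corner (either upper or lower), at most one of the four indicators is nonzero, and its sign determines the corner type.

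Consequently, if $\m(p)=\m(p')$ then $p$ and $p'$ share the same ``virtual corner'' data; the only possible disagreement is whether each virtual corner at $a<n$ is lifted to the left position $(a,b)$ or to the right position $(N-a, b-2)$ (no ambiguity occurs at $a=n$). Assume for contradiction $p \ne p'$, let $j_0$ be the smallest index with $y_{j_0}(p) \ne y_{j_0}(p')$, and take (WLOG) $y_{j_0}(p) = v+1$ and $y_{j_0}(p') = v-1$, where $v := y_{j_0-1}$. When $j_0 \ge 2$, the triples $(y_{j_0-2}, v, y_{j_0})$ in each path — with common $y_{j_0-2}$ — reveal that exactly one of $p, p'$ acquires a corner at position $c := j_0-1$, of the type determined by $y_{j_0-2}$. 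The corresponding $Y^{\pm 1}$ contribution in that path must be matched in the other path by a mirror corner at position $N-c$; but since $y_c$ is common to both paths, any such mirror corner would pin $y_{N-c}$ to $v\mp 2$, and then the admissibility inequality between $y_c$ and $y_{N-c}$ reduces to one of the form $v \le v-2$, a contradiction.

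The boundary case $j_0=1$ is handled analogously: depending on $y_2(p)$ and $y_2(p')$, at most one of the two paths carries a corner at position $1$, and the putative compensating mirror corner at position $N-1$ is now blocked by the endpoint condition $y_N = N-i+k$, since the required value of $y_{N-1}$ together with the corner type would force $y_N$ to be $i+k\pm 2$, incompatible with $y_N = N-i+k$ for $i \in I$. The main obstacle is the finite but delicate case analysis in this ``blocker'' step: one must separate the sub-cases $c<n$, $c=n$ (where no mirror position exists, so the discrepancy in the $Y_{n,\cdot}$ factor is immediate), and $c>n$, keeping track of the two shift conventions $\delta(c\ge n)$ and $\delta(c>n)$ appearing in \eqref{eq:monomial map}. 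Once each sub-case is checked, $C_{p,\pm}=C_{p',\pm}$ and Lemma \ref{lem:characterization of paths} yields $p=p'$.
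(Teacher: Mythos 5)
Your main case $j_0\ge 2$ is correct and takes a genuinely different route from the paper. You first extract a multiplicity-free property of the monomial of a single admissible path (the exponent of each $Y_{a,b}$ lies in $\{-1,0,1\}$ because the left position $(a,b)$ and the mirror position $(N-a,b-2)$ cannot both carry a contributing corner, by $y_a\le y_{N-a}$), and then compare two paths at the \emph{smallest} differing index; the paper instead chooses a corner of $p_1$ that is not a corner of $p_2$ with the \emph{largest} abscissa $j<n$ and propagates a contradiction inward. Your observation that exactly one of $p,p'$ has a corner at $c=j_0-1$, and the admissibility blocking of the mirror corner at $N-c$, check out in all three sub-cases $c<n$, $c=n$, $c>n$.

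The boundary case $j_0=1$ contains a genuine gap. First, the assertion that at most one of the two paths carries a corner at position $1$ is false: with $v=y_0=i+k$ one can have $y_2(p)=y_2(p')=v$, so that $p$ has a lower corner at $(1,v+1)$ and $p'$ an upper corner at $(1,v-1)$ simultaneously (this sub-case still closes, since each contribution must be matched separately and the endpoint condition blocks both mirrors, each forcing $y_N=i+k-2\ne N-i+k$). The real problem is the sub-case $y_2(p)=v+2$, $y_2(p')=v-2$, in which \emph{neither} path has a corner at position $1$: there is then no corner at the first differing position to argue with, and your proof stalls. Moreover your stated blockers do not extend to the first position $c\ge 2$ where, say, $p$ acquires its first (lower) corner at $(c,v+c)$: the putative mirror corner of $p'$ at $(N-c,v+c-2)$ is consistent with admissibility ($v-c\le v+c-2$ holds for all $c\ge 1$) and, when $i=n$, with the endpoint condition as well. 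The sub-case can be repaired, for instance by matching instead the contribution of the first corner of $p'$ (an upper corner at $(c',v-c')$) and using that $p$ increases monotonically up to position $c$, so that $y_{c'}(p)\ge v+2c-c'$ excludes both the left source and, via $y_{c'}\le y_{N-c'}$, the mirror source; but this is an additional argument not present in your proposal, so as written the proof is incomplete.
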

\begin{proof}
    Let $p_1$ and $p_2$ be paths in $\ov{\Pa}_{i,k}$.
    Assume that $\m(p_1) = \m(p_2)$.
    If $p_1$ is the highest or lowest path in $\ov{\Pa}_{i,k}$, so is $p_2$ by Lemma \ref{lem:unqiue dominant monomial}.
    Assume contrary, that is, both $p_1$ and $p_2$ are not the highest or lowest paths.
    
    If $p_1 \neq p_2$, then it follows from Lemma \ref{lem:characterization of paths} that there exists an upper (resp.~lower) corner of $p_1$, say $(j,\ell)$, which is not an upper (resp.~lower) corner of $p_2$.
	If $j = n$, then $\m(p_1)$ has $Y_{n,\ell}^{\pm 1}$ as its factor, while $\m(p_2)$ does not. Thus $j \neq n$, so we may assume $j < n$ (The case of $j > n$ also give a contraction by similar argument as shown below). 

    Let us take the largest $j < n$ such that $(j,\ell)$ is a corner of $p_1$.
    Since $\m(p_1) = \m(p_2)$, the path $p_2$ should have a corner at $(N-j, \ell-2)$. 
    Then $p_2$ has no corner at $(j,\ell-2)$ since, otherwise, $p_1$ has a corner at $(N-j, \ell-4)$ (recall $\m(p_1) = \m(p_2)$). This is a contradiction to the admissible condition \eqref{eq:admissible paths}.
    Therefore, the part of $p_2$ adjacent to $(j,\ell-2)$ should be one of the following configurations:
    \begin{equation*}
        \begin{split}
            \begin{tikzpicture}[scale=0.65]
                \draw[help lines, color=gray!30, dashed] (-2.1,-0.1) grid (2.1, 2.5); 
                \node at (-1, 2.5) {\scalebox{0.8}{\scr{$j-1$}}};
                \node at (0, 2.5) {\scalebox{0.8}{\scr{$j$}}};
                \node at (1, 2.5) {\scalebox{0.8}{\scr{$j+1$}}};
                %
                \node at (-3, 2) {\scalebox{0.8}{\scr{$\ell-3$}}};
                \node at (-3, 1) {\scalebox{0.8}{\scr{$\ell-2$}}};
                \node at (-3, 0) {\scalebox{0.8}{\scr{$\ell-1$}}};
                \node (1) at (0, 1) {$\bullet$};
                \node (2) at (-1, 0) {$\bullet$};
                \node (3) at (1,  2) {$\bullet$};
                %
                \draw[draw=lightgray] (1.center) -- (2.center);
                \draw[draw=lightgray] (1.center) -- (3.center);
            \end{tikzpicture}
            & \quad \raisebox{0.8cm}{\text{ or }} \qquad
            \begin{tikzpicture}[scale=0.65]
                \draw[help lines, color=gray!30, dashed] (-2.1,-0.1) grid (2.1, 2.5); 
                \node at (-1, 2.5) {\scalebox{0.8}{\scr{$j-1$}}};
                \node at (0, 2.5) {\scalebox{0.8}{\scr{$j$}}};
                \node at (1, 2.5) {\scalebox{0.8}{\scr{$j+1$}}};
                %
                \node at (-3, 2) {\scalebox{0.8}{\scr{$\ell-3$}}};
                \node at (-3, 1) {\scalebox{0.8}{\scr{$\ell-2$}}};
                \node at (-3, 0) {\scalebox{0.8}{\scr{$\ell-1$}}};
                \node (1) at (0, 1) {$\bullet$};
                \node (2) at (-1, 2) {$\bullet$};
                \node (3) at (1, 0) {$\bullet$};
                %
                \draw[draw=lightgray] (1.center) -- (2.center);
                \draw[draw=lightgray] (1.center) -- (3.center);
            \end{tikzpicture}
        \end{split}
    \end{equation*}
    In any case, there is a corner at $(j',\ell')$ in $p_2$ and, therefore, also in $p_1$ with $j < j' < n$. This contradicts our choice of $j$.
    Hence we have $p_1 = p_2$ when $\m(p_1) = \m(p_2)$.
\end{proof}

\subsection{Connected components of $\ov{\Pa}_{i,k}$}
For $p_1,\, p_2 \in \ov{\Pa}_{i,k}$, we say that the paths $p_1$ and $p_2$ are {\it connected} if one can obtained from another one by applying a (finite) sequence of raising or lowering moves in Section \ref{subsec:moves}.
Then a {\it connected component} is a subset of $\ov{\Pa}_{i,k}$ consisting of paths in $\ov{\Pa}_{i,k}$, which are connected to each other.
In particular, for $j \in I$, 
we say that two paths $p_1$ and $p_2$ are in the {\it $j$-connected component} or {\it $j$-component} for short, if $p_1$ and $p_2$ are connected by a sequence consisting of raising or lowering moves associated with $k \in \Io$ such that $\ov{k} = j$.
Note that $j$-connected component may consist of only one path.
For convenience, we write $p_1 \overset{j}{\rightarrow} p_2$ (resp.~$p_1 \overset{\,\,j}{\leftarrow} p_2$) if $p_2$ is obtained from $p_1$ by applying a lowering (resp.~raising) move associated with $k \in \Io$ such that $\ov{k} = j$.

\begin{prop} \label{prop:disjoint union}
Any path in $\ov{\Pa}_{i,k}$ is contained in at least one $j$-component for $j \in I$. Moreover, for a fixed $j \in I$, $\ov{\Pa}_{i,k}$ can be written as a disjoint union of $j$-components.
\end{prop}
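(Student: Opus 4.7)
The plan is to recognize that the proposition amounts to the standard fact that an equivalence relation partitions its underlying set into pairwise disjoint classes. For fixed $j \in I$, I would verify that ``$p_1$ and $p_2$ lie in the same $j$-component'' defines an equivalence relation on $\ov{\Pa}_{i,k}$; the first assertion is then immediate from reflexivity (every $p$ lies in its own $j$-component), and the second assertion is the resulting partition into equivalence classes.

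Reflexivity is witnessed by the empty sequence of $j$-moves. Symmetry reduces to the observation from Section \ref{subsec:moves} that the lowering operator $\A^{-1}_{\ov{k},\ell}$ and the raising operator $\A_{\ov{k},\ell}$ at the same location are mutual inverses: each transforms an upper corner at $(k,\ell-1)$ into a lower corner at $(k,\ell+1)$ and vice versa. Consequently any sequence of $j$-moves from $p_1$ to $p_2$ can be inverted by reversing its order and swapping lowering with raising. Transitivity is immediate by concatenation of move sequences. These three properties together force the disjoint union decomposition in the second assertion.

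The one point requiring some care is whether a single $j$-move applied to an admissible path necessarily produces another admissible path, since in principle a lowering move at a position $k$ with $\ov{k}=j$ and $k \le n$ increases $y_k$ by $2$ and could push it past $y_{N-k}$, violating \eqref{eq:admissible paths}. However, the definition of $j$-connectedness is phrased at the level of $\Pa_{i,k}$ and only demands that the endpoints $p_1$ and $p_2$ themselves lie in $\ov{\Pa}_{i,k}$, so intermediate paths are permitted to leave $\ov{\Pa}_{i,k}$ during the sequence. Under this reading no further verification is needed, and I anticipate no genuine obstacle; the proposition is fundamentally a set-theoretic statement and the explicit descriptions of the moves in Section \ref{subsec:moves} supply all of the nontrivial content.
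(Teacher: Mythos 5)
Your proof is correct and takes essentially the same route as the paper, whose own argument is just the one-line observation that the first assertion follows from Lemma \ref{lem:characterization of paths} and that the second is ``straightforward''; your equivalence-relation argument (reflexivity via the empty sequence, symmetry via the mutual invertibility of $\A_{\ov{k},\ell}$ and $\A_{\ov{k},\ell}^{-1}$, transitivity via concatenation) is exactly the standard filling-in of that claim. Your caveat about moves not preserving admissibility is consistent with the paper's Remark \ref{rem:moves do not preserve admissibility}, and in any case the partition conclusion holds under either reading of the definition, since the equivalence-class structure is unaffected.
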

\begin{proof}
The first assertion follows directly from Lemma \ref{lem:characterization of paths}, and the second assertion is straightforward.
\end{proof}

\begin{ex} \label{ex:j-component}
{\em 
Let $n = 3$ and let $j = 2$.
The set consisting of following paths is a $2$-component of $\ov{\Pa}_{3,0}$:
\begin{equation*}
\begin{tikzpicture}[scale=0.35, baseline=(current  bounding  box.center)]
            \draw[help lines, color=gray!30, dashed] (-3.1,-3.1) grid (3.1, 3.5); 
            %
            \node at (-3, 3.5) {\scr{$0$}};
            \node at (-2, 3.5) {\scr{$1$}};
            \node at (-1, 3.5) {\scr{$2$}};
            \node at (0, 3.5) {\scr{$3$}};
            \node at (1, 3.5) {\scr{$\boxed{4}$}};
            \node at (2, 3.5) {\scr{$5$}};
            \node at (3, 3.5) {\scr{$6$}};
            %
            \node at (-3.7, 2) {\scr{$0$}};
            \node at (-3.7, 1) {\scr{$1$}};
            \node at (-3.7, 0) {\scr{$2$}};
            \node at (-3.7, -1) {\scr{$3$}};
            \node at (-3.7, -2) {\scr{$4$}};
            %
            \node (0) at (-3,   -1) {$\bullet$};
            \node (1) at (-2,   0) {$\bullet$};
            \node (2) at (-1,   1) {\red{$\bullet$}};
            \node (3) at ( 0,   0) {\blue{$\bullet$}};
            \node (4) at ( 1,   1) {\red{$\bullet$}};
            \node (5) at ( 2,   0) {$\bullet$};
            \node (6) at ( 3,   -1) {$\bullet$};
            %
            \draw[draw=lightgray] (0.center) -- (1.center);
            \draw[draw=lightgray] (1.center) -- (2.center);
            \draw[draw=lightgray] (2.center) -- (3.center);
            \draw[draw=lightgray] (3.center) -- (4.center);
            \draw[draw=lightgray] (4.center) -- (5.center);
            \draw[draw=lightgray] (5.center) -- (6.center);
	\end{tikzpicture}
	\quad
	\overset{2}{\underset{\scalebox{1.1}{$\underset{2}{\longleftarrow}
	$}}{\longrightarrow}}
	\quad
	\begin{tikzpicture}[scale=0.35, baseline=(current  bounding  box.center)]
            \draw[help lines, color=gray!30, dashed] (-3.1,-3.1) grid (3.1, 3.5); 
            %
            \node at (-3, 3.5) {\scr{$0$}};
            \node at (-2, 3.5) {\scr{$1$}};
            \node at (-1, 3.5) {\scr{$\boxed{2}$}};
            \node at (0, 3.5) {\scr{$3$}};
            \node at (1, 3.5) {\scr{$4$}};
            \node at (2, 3.5) {\scr{$5$}};
            \node at (3, 3.5) {\scr{$6$}};
            %
            \node at (-3.7, 2) {\scr{$0$}};
            \node at (-3.7, 1) {\scr{$1$}};
            \node at (-3.7, 0) {\scr{$2$}};
            \node at (-3.7, -1) {\scr{$3$}};
            \node at (-3.7, -2) {\scr{$4$}};
            %
            \node (0) at (-3,   -1) {$\bullet$};
            \node (1) at (-2,   0) {$\bullet$};
            \node (2) at (-1,   1) {\red{$\bullet$}};
            \node (3) at ( 0,   0) {$\bullet$};
            \node (4) at ( 1,   -1) {\blue{$\bullet$}};
            \node (5) at ( 2,   0) {\red{$\bullet$}};
            \node (6) at ( 3,   -1) {$\bullet$};
            %
            \draw[draw=lightgray] (0.center) -- (1.center);
            \draw[draw=lightgray] (1.center) -- (2.center);
            \draw[draw=lightgray] (2.center) -- (3.center);
            \draw[draw=lightgray] (3.center) -- (4.center);
            \draw[draw=lightgray] (4.center) -- (5.center);
            \draw[draw=lightgray] (5.center) -- (6.center);
	\end{tikzpicture}
	\quad
	\overset{2}{\underset{\scalebox{1.1}{$\underset{2}{\longleftarrow}
	$}}{\longrightarrow}}
	\quad
		\begin{tikzpicture}[scale=0.35, baseline=(current  bounding  box.center)]
            \draw[help lines, color=gray!30, dashed] (-3.1,-3.1) grid (3.1, 3.5); 
            %
            \node at (-3, 3.5) {\scr{$0$}};
            \node at (-2, 3.5) {\scr{$1$}};
            \node at (-1, 3.5) {\scr{$2$}};
            \node at (0, 3.5) {\scr{$3$}};
            \node at (1, 3.5) {\scr{$4$}};
            \node at (2, 3.5) {\scr{$5$}};
            \node at (3, 3.5) {\scr{$6$}};
            %
            \node at (-3.7, 2) {\scr{$0$}};
            \node at (-3.7, 1) {\scr{$1$}};
            \node at (-3.7, 0) {\scr{$2$}};
            \node at (-3.7, -1) {\scr{$3$}};
            \node at (-3.7, -2) {\scr{$4$}};
            %
            \node (0) at (-3,   -1) {$\bullet$};
            \node (1) at (-2,   0) {\red{$\bullet$}};
            \node (2) at (-1,   -1) {\blue{$\bullet$}};
            \node (3) at ( 0,   0) {\red{$\bullet$}};
            \node (4) at ( 1,   -1) {\blue{$\bullet$}};
            \node (5) at ( 2,   0) {\red{$\bullet$}};
            \node (6) at ( 3,   -1) {$\bullet$};
            %
            \draw[draw=lightgray] (0.center) -- (1.center);
            \draw[draw=lightgray] (1.center) -- (2.center);
            \draw[draw=lightgray] (2.center) -- (3.center);
            \draw[draw=lightgray] (3.center) -- (4.center);
            \draw[draw=lightgray] (4.center) -- (5.center);
            \draw[draw=lightgray] (5.center) -- (6.center);
	\end{tikzpicture}
\end{equation*}
Here the lowering and raising moves occur under the boxed numbers.
}
\end{ex}

\begin{rem} \label{rem:moves do not preserve admissibility}
{\em 
In general, the raising and lowering moves do not preserve admissible paths. In Example \ref{ex:j-component}, if we apply the lowering move associated with $2$ to the first path, then the resulting path is not an admissible path.
Therefore, if there are two upper (resp.~lower) corners at $(j,\ell)$ and $(N-j,\ell)$, then we apply the lowering (resp.~raising) move to the corner at $(N-j,\ell)$ (resp.~$(j,\ell)$) as in Example \ref{ex:j-component}.
}
\end{rem}

\begin{lem} \label{lem:maximal length of j-component}
Let $j \in I$, and let $C$ be a $j$-component of $\ov{\Pa}_{i,k}$. Then the number of paths in $C$ is at most $4$.
\end{lem}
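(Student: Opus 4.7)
The plan is to argue that a $j$-component is controlled by the corner structure at only a handful of special positions, each admitting at most two states. Moves in a $j$-component, by definition, act at positions $r \in \Io$ with $\overline{r} = j$. When $j < n$ there are exactly two such positions, $r = j$ and $r = N - j$, and they are separated by $N - 2j = 2(n-j) \ge 2$; when $j = n$ there is only one such position, namely $r = n$. This non-adjacency is the key geometric input.

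A single raising or lowering move at position $r$ shifts only the coordinate $y_r$ by $\pm 2$ and toggles an upper corner at $r$ into a lower corner at $r$, or conversely; whether $r$ is a corner at all is determined solely by the local triple $(y_{r-1}, y_r, y_{r+1})$. For any other position $r' \in \Io$ with $\overline{r'} = j$, the separation $|r - r'| \ge 2$ ensures that a move at $r'$ changes only $y_{r'}$ and thus never touches $y_{r-1}$, $y_r$, or $y_{r+1}$. Consequently, moves at distinct positions of the $j$-component commute, and whether any given position $r$ with $\overline{r} = j$ carries a corner is invariant throughout the $j$-component.

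Combining these observations, every path in the component is determined by its corner type at the (at most two) positions $r \in \Io$ with $\overline{r} = j$, and at each such position the state is either fixed (no corner, so no move can act there) or toggles between exactly two values (upper vs.\ lower). Therefore $|C| \le 2^{\#\{r \in \Io \,:\, \overline{r} = j\}} \le 2^2 = 4$. The only delicate step is the decoupling of the two positions $j$ and $N - j$, secured by the inequality $N - 2j \ge 2$; once this is in hand, the bound is immediate. The admissibility condition \eqref{eq:admissible paths} plays no role in the upper bound and can only shrink the component further, as Example \ref{ex:j-component} already illustrates with a $2$-component of size $3$ rather than $4$.
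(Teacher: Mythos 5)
Your argument is correct, and it takes a genuinely different route from the paper. The paper proves the lemma by an explicit case analysis on the pair $\bigl(|C_{p,+}^{(j)}|,|C_{p,-}^{(j)}|\bigr)$ of numbers of upper and lower corners of $p$ at positions $r$ with $\ov{r}=j$ (the possibilities being $(0,0)$, $(1,0)$, $(0,1)$, $(2,0)$, $(1,1)$, $(0,2)$), and in each case it writes down the component $C$ explicitly as a set of at most four paths, using the admissibility condition \eqref{eq:admissible paths} to decide which moves are actually available. Your proof instead isolates the structural reason for the bound: moves in a $j$-component act only at the at most two positions $j$ and $N-j$, these are separated by $N-2j\ge 2$ so a move at one does not alter the local triple at the other, a move at a given position only toggles $y_r$ between two values (upper corner at height $\ell-1$ versus lower corner at height $\ell+1$), and all paths in the component agree elsewhere; hence $|C|\le 2^2=4$. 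This is cleaner and makes transparent why admissibility is irrelevant to the upper bound. What the paper's longer proof buys is the explicit description of each component (which moves connect which paths, and in what order), and those descriptions are cited again in Cases 2--4 of the proof of Theorem \ref{thm:main1} to write $\m(C)$ as a product of $\bigl(Y+YA^{-1}\bigr)$-type factors; your argument establishes the stated cardinality bound but would need to be supplemented by that finer bookkeeping to serve the same downstream purpose.
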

\begin{proof}
Let $p \in C$.
By Lemma \ref{lem:characterization of paths}, it is enough to find all distinct paths in the $j$-component $C$ from $p$ by applying raising or lowering moves associated with $k \in \Io$ such that $\ov{k} = j$, as possible.
For this, we denote by $C_{p,\pm}^{(j)}$ the subset of $C_{p,\pm}$ consisting of upper and lower corners at $(k,\ell)$ for $\ov{k} = j$ and $\ell \in \Z$, respectively.
By definition, the possibility of $(|C_{p,+}^{(j)}|, |C_{p,-}^{(j)}|)$ is $(0,0)$, $(1,0)$, $(0,1)$, $(2, 0)$, $(1, 1)$ or $(0,2)$.
In each case, we will characterize $C$ as follows:
\smallskip

\noindent
{\it Case 1}. $(|C_{p,+}^{(j)}|, |C_{p,-}^{(j)}|) = (0, 0)$. In this case, $C = \left\{\, p \,\right\}$.

\noindent
{\it Case 2}. $(|C_{p,+}^{(j)}|, |C_{p,-}^{(j)}|) = (1, 0)$ or $(0, 1)$. 
In this case, we claim that
\begin{equation} \label{eq:C=2 case}
	C = 
	\begin{cases}
		\left\{\, p,\, p\A_{k,\ell+1}^{-1} \,\right\} & \text{if $(|C_{p,+}^{(j)}|, |C_{p,-}^{(j)}|) = (1, 0)$,} \\
		\left\{\,\, p\A_{k,\ell-1},\, p \,\,\right\} & \text{if $(|C_{p,+}^{(j)}|, |C_{p,-}^{(j)}|) = (0, 1)$,}
	\end{cases}
\end{equation}
where $\A_{k,\ell\pm 1}^{\pm 1}$ is given in Section \ref{subsec:moves}.
We verify our claim in the case of $(|C_{p,+}^{(j)}|, |C_{p,-}^{(j)}|) = (1, 0)$. The other case is almost identical. 

Let $(k,\ell)$ be the upper corner of $p$ with $\ov{k} = j$. 
If $k = n$, then it is done, since there is no restriction to apply the lowering move to $p$ in $\ov{\Pa}_{i,k}$. 
Assume that $k \neq n$.
If $k > n$, then $(j,\ell_j)$ is neither upper nor lower corner and $\ell_j \le \ell$ by \eqref{eq:admissible paths}. Thus we can apply the lowering move to $p$, and then we have \eqref{eq:C=2 case}.
Suppose $j = k < n$. In this case, the part of $p$ adjacent to $(\ov{k},\ell')$ is one of the following configurations:
\begin{equation*}
        \begin{split}
            \begin{tikzpicture}[scale=0.65]
                \draw[help lines, color=gray!30, dashed] (-2.1,-0.1) grid (2.1, 2.5); 
                \node at (-1, 2.5) {\scalebox{0.8}{\scr{$\ov{k+1}$}}};
                \node at (0, 2.5) {\scalebox{0.8}{\scr{$\ov{k}$}}};
                \node at (1, 2.5) {\scalebox{0.8}{\scr{$\ov{k-1}$}}};
                %
                \node at (-3, 2) {\scalebox{0.8}{\scr{$\ell'-1$}}};
                \node at (-3, 1) {\scalebox{0.8}{\scr{$\ell'$}}};
                \node at (-3, 0) {\scalebox{0.8}{\scr{$\ell'+1$}}};
                \node (1) at (0, 1) {$\bullet$};
                \node (2) at (-1, 0) {$\bullet$};
                \node (3) at (1,  2) {$\bullet$};
                %
                \draw[draw=lightgray] (1.center) -- (2.center);
                \draw[draw=lightgray] (1.center) -- (3.center);
            \end{tikzpicture}
            & \quad \raisebox{0.8cm}{\text{ or }} \qquad
            \begin{tikzpicture}[scale=0.65]
                \draw[help lines, color=gray!30, dashed] (-2.1,-0.1) grid (2.1, 2.5); 
                \node at (-1, 2.5) {\scalebox{0.8}{\scr{$\ov{k+1}$}}};
                \node at (0, 2.5) {\scalebox{0.8}{\scr{$\ov{k}$}}};
                \node at (1, 2.5) {\scalebox{0.8}{\scr{$\ov{k-1}$}}};
                %
                \node at (-3, 2) {\scalebox{0.8}{\scr{$\ell'-1$}}};
                \node at (-3, 1) {\scalebox{0.8}{\scr{$\ell'$}}};
                \node at (-3, 0) {\scalebox{0.8}{\scr{$\ell'+1$}}};
                \node (1) at (0, 1) {$\bullet$};
                \node (2) at (-1, 2) {$\bullet$};
                \node (3) at (1, 0) {$\bullet$};
                %
                \draw[draw=lightgray] (1.center) -- (2.center);
                \draw[draw=lightgray] (1.center) -- (3.center);
            \end{tikzpicture}
        \end{split}
    \end{equation*}
	However, in any case, $\ell \le \ell' \le \ell+1$ is not possible by \eqref{eq:admissible paths}, since $(k,\ell)$ is the upper corner.
	Hence, $\ell + 2 \le \ell'$ which implies that we can apply the lowering move to $p$.
	
	\noindent
	{\it Case 3}. $(|C_{p,+}^{(j)}|, |C_{p,-}^{(j)}|) = (2, 0)$, $(1,1)$ or $(0, 2)$. First, we consider the case of $(|C_{p,+}^{(j)}|, |C_{p,-}^{(j)}|) = (2, 0)$. We denote by $(j_1, \ell_1)$ and $(j_2, \ell_2)$ the upper corners in $C_{p,+}^{(j)}$. Note that $j_1 \neq j_2$, so we assume that $j_1 < n < j_2$.
	If $\ell_1 = \ell_2 = \ell$, then it follows from Remark \ref{rem:moves do not preserve admissibility} that
	\begin{equation*}
		C = \left\{\, p,\, p\A_{j_2,\ell+1}^{-1},\, p\A_{j_2,\ell+1}^{-1}\A_{j_1,\ell+1}^{-1} \,\right\}.
	\end{equation*}
	If $\ell_1 < \ell_2$, then \eqref{eq:X} implies that $\ell_1 + 2 \le \ell_2$. Hence, we have
	\begin{equation*}
		C = \left\{\, p,\, p\A_{j_1,\ell_1+1}^{-1},\, p\A_{j_2,\ell_2+1}^{-1},\, p\A_{j_1,\ell_1+1}^{-1}\A_{j_2,\ell_2+1}^{-1} \,\right\}.
	\end{equation*}
	Here one can check that $p\A_{j_1,\ell_1+1}^{-1}\A_{j_2,\ell_2+1}^{-1} =  p\A_{j_2,\ell_2+1}^{-1}\A_{j_1,\ell_1+1}^{-1}$.
	By symmetry, we also conclude that $C$ has at most four paths in the case of $(|C_{p,+}^{(j)}|, |C_{p,-}^{(j)}|) = (0,2)$.
	Next, we consider the case of $(|C_{p,+}^{(j)}|, |C_{p,-}^{(j)}|) = (1,1)$.
	Let $(j_1, \ell_1) \in C_{p,+}^{(j)}$ and $(j_2, \ell_2) \in C_{p,-}^{(j)}$.
	Clearly, $j_1 \neq n$ and $j_2 \neq n$.
	The part of $p$ adjacent to $(j_1, \ell_1)$ and $(j_2, \ell_2)$ as follows:
	\begin{equation*}
	\begin{cases}
			\begin{tikzpicture}[scale=0.5]
                \draw[help lines, color=gray!30, dashed] (-2.1,-1.1) grid (2.1, 1.5); 
                \node at (-1, 1.5) {\scalebox{0.7}{\scr{$j_2-1$}}};
                \node at (0, 1.5) {\scalebox{0.7}{\scr{$j_2$}}};
                \node at (1, 1.5) {\scalebox{0.7}{\scr{$j_2+1$}}};
                %
                \node at (-3, 1) {\scalebox{0.8}{\scr{$\ell_2-2$}}};
                \node at (-3, 0) {\scalebox{0.8}{\scr{$\ell_2-1$}}};
                \node at (-3, -1) {\scalebox{0.8}{\scr{$\ell_2$}}};
                \node (1) at (0, -1) {$\bullet$};
                \node (2) at (-1, 0) {$\bullet$};
                \node (3) at (1,  0) {$\bullet$};
                %
                \draw[draw=lightgray] (1.center) -- (2.center);
                \draw[draw=lightgray] (1.center) -- (3.center);
            \end{tikzpicture}
            \qquad
            \raisebox{8mm}{$\cdots$}
            \qquad
	        \begin{tikzpicture}[scale=0.5]
            \draw[help lines, color=gray!30, dashed] (-2.1,-1.1) grid (2.1, 1.5); 
            \node at (-1, 1.5) {\scalebox{0.7}{\scr{$j_1-1$}}\,\,\,};
            \node at (0, 1.5) {\scalebox{0.7}{\scr{$j_1$}}};
            \node at (1, 1.5) {\scalebox{0.7}{\,\,\,\scr{$j_1+1$}}};
            %
            \node at (-3, 1) {\scalebox{0.8}{\scr{$\ell_1$}}};
            \node at (-3, 0) {\scalebox{0.8}{\scr{$\ell_1+1$}}};
            \node at (-3, -1) {\scalebox{0.8}{\scr{$\ell_1+2$}}};
            \node (1) at (0, 1) {$\bullet$};
            \node (2) at (-1, 0) {$\bullet$};
            \node (3) at (1,  0) {$\bullet$};
            %
            \draw[draw=lightgray] (1.center) -- (2.center);
            \draw[draw=lightgray] (1.center) -- (3.center);
        \end{tikzpicture}
        & \raisebox{7mm}{\text{if $j_2 < n < j_1$,}} \\ 
        \begin{tikzpicture}[scale=0.5]
            \draw[help lines, color=gray!30, dashed] (-2.1,-1.1) grid (2.1, 1.5); 
            \node at (-1, 1.5) {\scalebox{0.7}{\scr{$j_1-1$}}\,\,\,};
            \node at (0, 1.5) {\scalebox{0.7}{\scr{$j_1$}}};
            \node at (1, 1.5) {\scalebox{0.7}{\,\,\,\scr{$j_1+1$}}};
            %
            \node at (-3, 1) {\scalebox{0.8}{\scr{$\ell_1$}}};
            \node at (-3, 0) {\scalebox{0.8}{\scr{$\ell_1+1$}}};
            \node at (-3, -1) {\scalebox{0.8}{\scr{$\ell_1+2$}}};
            \node (1) at (0, 1) {$\bullet$};
            \node (2) at (-1, 0) {$\bullet$};
            \node (3) at (1,  0) {$\bullet$};
            %
            \draw[draw=lightgray] (1.center) -- (2.center);
            \draw[draw=lightgray] (1.center) -- (3.center);
        \end{tikzpicture}
        \qquad
        \raisebox{8mm}{$\cdots$}
        \qquad
        \begin{tikzpicture}[scale=0.5]
                \draw[help lines, color=gray!30, dashed] (-2.1,-1.1) grid (2.1, 1.5); 
                \node at (-1, 1.5) {\scalebox{0.7}{\scr{$j_2-1$}}};
                \node at (0, 1.5) {\scalebox{0.7}{\scr{$j_2$}}};
                \node at (1, 1.5) {\scalebox{0.7}{\scr{$j_2+1$}}};
                %
                \node at (-3, 1) {\scalebox{0.8}{\scr{$\ell_2-2$}}};
                \node at (-3, 0) {\scalebox{0.8}{\scr{$\ell_2-1$}}};
                \node at (-3, -1) {\scalebox{0.8}{\scr{$\ell_2$}}};
                \node (1) at (0, -1) {$\bullet$};
                \node (2) at (-1, 0) {$\bullet$};
                \node (3) at (1,  0) {$\bullet$};
                %
                \draw[draw=lightgray] (1.center) -- (2.center);
                \draw[draw=lightgray] (1.center) -- (3.center);
            \end{tikzpicture}
            & \raisebox{7mm}{\text{if $j_1 < n < j_2$.}}
    \end{cases} 
	\end{equation*}
	Here it follows from \eqref{eq:admissible paths} that $\ell_2 \le \ell_1$ if $j_2 < n < j_1$, and $\ell_1 +2 \le \ell_2$ if $j_1 < n < j_2$.
	Hence we have
	\begin{equation*}
		C = 
		\begin{cases}
			\left\{\, p\A_{j_2,\ell_2-1},\, p,\, p\A_{j_1,\ell_1+1}^{-1} \,\right\} & \text{if $j_2 < n < j_1$,} \\
			\left\{\, p\A_{j_1,\ell_1+1}^{-1},\, p,\, p\A_{j_2,\ell_2-1}^{-1} \,\right\} & \text{if $j_1 < n < j_2$.}
		\end{cases}
	\end{equation*}

	By {\it Case 1}--{\it Case 3}, we have seen that $C$ has at most four paths. This completes the proof.
\end{proof}

\subsection{Path description for $q$-characters of fundamental modules}
Now, we are in a position to state the main result of this paper.
\begin{thm} \label{thm:main1}
    For $i \in I$ and $k \in \Z_+$, we have
    \begin{equation} \label{eq:path model}
    \begin{split}
    	\chi_q(L(Y_{i, k}))
		=
		\sum_{p\, \in\, \ov{\Pa}_{i,k} } \m (p),
	\end{split}
    \end{equation}
    where $\ov{\Pa}_{i,k}$ is given in \eqref{eq:admissible paths}.
\end{thm}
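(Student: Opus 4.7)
The plan is to apply Theorem \ref{thm:Image of qchar is the intersection of kernels of Si} and reduce the identity to two subgoals for the element $F_{i,k} := \sum_{p \in \ov{\Pa}_{i,k}} \m(p) \in \mcY$: (A) $F_{i,k} \in \ker(S_j)$ for every $j \in I$, and (B) $F_{i,k} - \chi_q(L(Y_{i,k}))$ contains no dominant monomial. Step (B) is essentially already done: Lemma \ref{lem:unqiue dominant monomial} identifies $Y_{i,k}$ as the only dominant monomial occurring in $F_{i,k}$, and Lemma \ref{lem:thin property} forces multiplicity one; combined with the standard fact that $Y_{i,k}$ is also the unique dominant monomial (with multiplicity one) of $\chi_q(L(Y_{i,k}))$, the difference has no dominant monomial. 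Once (A) is established, the last sentence of Theorem \ref{thm:Image of qchar is the intersection of kernels of Si} forces $F_{i,k} - \chi_q(L(Y_{i,k})) = 0$, finishing the proof.

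Thus the heart of the argument is (A). Fix $j \in I$. By Proposition \ref{prop:disjoint union}, $\ov{\Pa}_{i,k}$ decomposes as a disjoint union of $j$-components, and it suffices to check that $\sum_{p \in C} \m(p) \in \ker(S_j)$ for every such component $C$. Lemma \ref{lem:maximal length of j-component} tightly controls each $C$: $|C| \le 4$, and its proof enumerates every possible shape of $C$ according to $(|C^{(j)}_{p,+}|, |C^{(j)}_{p,-}|)$. The key computational input, which I would verify directly from the definition \eqref{eq:monomial map} of $\m$, the recipe for the moves in Section \ref{subsec:moves}, and the type-$C_n$ formula \eqref{eq:variable A} for $A_{j,a}$, is the identity
\begin{equation*}
	\m\bigl(p\A_{j,\ell}^{-1}\bigr) \;=\; \m(p)\cdot A_{j,q^{\ell}}^{-1}
\end{equation*}
whenever the lowering move is defined (and its obvious converse for raising moves). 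Granting this, each of the configurations in the proof of Lemma \ref{lem:maximal length of j-component} produces a factorization
\begin{equation*}
	\sum_{p \in C} \m(p) \;=\; M_C \cdot \prod_{\nu} \bigl(1 + A_{j, q^{\ell_\nu}}^{-1}\bigr),
\end{equation*}
where $M_C$ is a monomial free of the variables $Y_{j,a}$ and the $\ell_\nu$ are the spectral parameters of the corners with $\overline{k}=j$. Comparing with the explicit description \eqref{eq:description of ker Si}, each factor $Y_{j,b}(1 + A_{j,bq_j}^{-1})$ is a generator of $\ker(S_j)$, so $M_C \cdot \prod_\nu (1+A_{j,q^{\ell_\nu}}^{-1}) \in \ker(S_j)$.

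The main obstacle I expect is the verification of the displayed identity $\m(p\A_{j,\ell}^{-1}) = \m(p) A_{j,q^\ell}^{-1}$ in the type-$C_n$ setting: the formula \eqref{eq:variable A} contains the extra pair (for $a_{ji}=-2$) and triple (for $a_{ji}=-3$) of $Y_{j,*}^{\pm 1}$'s that are absent in type $A$, while $\m$ is twisted by the shifts $\delta(j>n)$ and $\delta(j\ge n)$. One must check that around the reflected columns $j$ and $N-j$, and in particular at $j=n-1$ and $j=n$, these shifts together with the admissibility condition \eqref{eq:admissible paths} conspire so that the $Y$-monomials flanking a move assemble precisely into $A_{j,q^\ell}^{\pm 1}$. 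The case analysis is finite but somewhat delicate: the cases where $C$ involves a move at a column $k$ with $\overline{k}=n-1$ or $\overline{k}=n$, and the components of sizes $3$ and $4$ in the proof of Lemma \ref{lem:maximal length of j-component} (where two moves on opposite sides of the center interact), are where one really uses the specific shape of admissible paths in type $C$.
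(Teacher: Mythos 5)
Your overall strategy is exactly the paper's: decompose $\ov{\Pa}_{i,k}$ into $j$-components via Proposition \ref{prop:disjoint union} and Lemma \ref{lem:maximal length of j-component}, show each component's monomial sum lies in $\ker(S_j)$ using \eqref{eq:description of ker Si}, and then conclude from Theorem \ref{thm:Image of qchar is the intersection of kernels of Si} together with the uniqueness of the dominant monomial (Lemmas \ref{lem:unqiue dominant monomial} and \ref{lem:thin property}). The reduction (B) and the key identity $\m(p\A_{j,\ell}^{-1}) = \m(p)\cdot A_{j,\ast}^{-1}$ (up to the spectral shift you rightly flag) are both as in the paper.

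There is, however, one concrete error: the claimed factorization $\sum_{p\in C}\m(p) = M_C\prod_\nu\bigl(1+A_{j,q^{\ell_\nu}}^{-1}\bigr)$ is false for the $j$-components with $|C|=3$, and these do occur (e.g.\ the $2$-component of $\ov{\Pa}_{3,0}$ in Example \ref{ex:j-component}, arising when the two relevant corners at columns $j$ and $N-j$ sit at the same height, or in the $(1,1)$ sub-case of Lemma \ref{lem:maximal length of j-component}). A nonempty product of such binomials has $2^{|\nu|}$ terms, never $3$; what one actually gets for $|C|=3$ is a length-three $\mathfrak{sl}_2$-string
\begin{equation*}
Y_{j,\ell-1}Y_{j,\ell+1}\left(1 + A_{j,\ell+2}^{-1} + A_{j,\ell+2}^{-1}A_{j,\ell}^{-1}\right)M .
\end{equation*}
This is still in $\ker(S_j)$, but not because it is a monomial in the generators of \eqref{eq:description of ker Si}: one must note, for instance, that it equals $\bigl(Y_{j,\ell+1}+Y_{j,\ell+1}A_{j,\ell+2}^{-1}\bigr)\bigl(Y_{j,\ell-1}+Y_{j,\ell-1}A_{j,\ell}^{-1}\bigr)M - Y_{j,\ell-1}Y_{j,\ell+1}A_{j,\ell}^{-1}M$, where the subtracted term is free of the variables $Y_{j,a}$ (since $A_{j,\ell}=Y_{j,\ell-1}Y_{j,\ell+1}\cdot(\text{inverses of other }Y\text{'s})$) and hence lies in $\Z[Y_{l,a}]_{l\neq j}\subset\ker(S_j)$. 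With that case repaired, your argument goes through and coincides with the paper's proof; the remaining delicacies you identify (the shifts $\delta(j>n)$, $\delta(j\ge n)$ and the columns $n-1$, $n$) are exactly where the paper spends its case analysis.
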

\begin{proof}
Let $j \in I$ be given.
By Proposition \ref{prop:disjoint union}, one can write
\begin{equation*}
    \ov{\Pa}_{i,k} = \bigsqcup_{t=1}^M C_t,
\end{equation*}
where $C_t$ is a $j$-component of $\ov{\Pa}_{i,k}$ for $1 \le t \le M$.
Put
\begin{equation*}
    \chi =
    \sum_{p\, \in\, \ov{\Pa}_{i,k} } \m (p)
    \quad
    \text{ and }
    \quad\!
    \m(C_t) = \sum_{p \in C_t} \m(p).
\end{equation*}
Then we have
\begin{equation*}
    \chi = \sum_{t=1}^M \m(C_t).
\end{equation*}
By Lemma \ref{lem:thin property}, any monomial in $\m(C_t)$ is not overlapped with other monomials in $\m(C_{t'})$ for $t' \neq t$.

Now, we claim that $\chi \in \ker(S_j)$ for all $j \in I$, which implies 
$\chi = \chi_q(L(Y_{i,k}))$
by Theorem \ref{thm:Image of qchar is the intersection of kernels of Si} and Lemma \ref{lem:unqiue dominant monomial}.

For this, let us consider a $j$-component $C$ of $\ov{\Pa}_{i,k}$.
We will show that $\m(C) \in \ker(S_j)$ which implies our claim, since $S_j$ is linear by its definition.
By Lemma \ref{lem:maximal length of j-component}, it is enough to consider four cases as follows:
\smallskip

\noindent
{\it Case 1}. $|C|=1$. The path $p$ in $\ov{\Pa}_{i,k} \cap C$ has no corner at $(k,\ell)$ with $\ov{k} = j$ for any $\ell \in \Z$. This implies that $\m(p)$ has no factor $Y_{j,\ell}^{\pm 1}$. Hence, $\m(p) \in \ker(S_j)$.
\smallskip

\noindent
{\it Case 2}. $|C|=2$. Let $p_1$ and $p_2$ be the paths in $\ov{\Pa}_{i,k} \cap C$.
We may assume that $p_2 = p_1 \A_{k,\ell}^{-1}$. 
Then the local configurations of $p_1$ and $p_2$ near by $(k,\ell)$ with $\ov{k} = j$ are depicted as shown below:
\begin{equation*}
    \begin{split}
        \begin{tikzpicture}[scale=0.5]
            \draw[help lines, color=gray!30, dashed] (-2.1,-1.1) grid (2.1, 1.5); 
            \node at (-1, 1.5) {\scalebox{0.8}{\scr{$k-1$}}};
            \node at (0, 1.5) {\scalebox{0.8}{\scr{$k$}}};
            \node at (1, 1.5) {\scalebox{0.8}{\scr{$k+1$}}};
            %
            \node at (-3, 1) {\scalebox{0.8}{\scr{$\ell-1$}}};
            \node at (-3, 0) {\scalebox{0.8}{\scr{$\ell$}}};
            \node at (-3, -1) {\scalebox{0.8}{\scr{$\ell+1$}}};
            \node (1) at (0, 1) {$\bullet$};
            \node (2) at (-1, 0) {$\bullet$};
            \node (3) at (1,  0) {$\bullet$};
            %
            \draw[draw=lightgray] (1.center) -- (2.center);
            \draw[draw=lightgray] (1.center) -- (3.center);
            \node at (0, -2) {\scr{\text{in the path $p_1$}}};
        \end{tikzpicture}
        & \quad \raisebox{1cm}{\text{ and }} \qquad
        \begin{tikzpicture}[scale=0.5]
            \draw[help lines, color=gray!30, dashed] (-2.1,-1.1) grid (2.1, 1.5); 
            \node at (-1, 1.5) {\scalebox{0.8}{\scr{$k-1$}}};
            \node at (0, 1.5) {\scalebox{0.8}{\scr{$k$}}};
            \node at (1, 1.5) {\scalebox{0.8}{\scr{$k+1$}}};
            %
            \node at (-3, 1) {\scalebox{0.8}{\scr{$\ell-1$}}};
            \node at (-3, 0) {\scalebox{0.8}{\scr{$\ell$}}};
            \node at (-3, -1) {\scalebox{0.8}{\scr{$\ell+1$}}};
            \node (1) at (0, -1) {$\bullet$};
            \node (2) at (-1, 0) {$\bullet$};
            \node (3) at (1,  0) {$\bullet$};
            %
            \draw[draw=lightgray] (1.center) -- (2.center);
            \draw[draw=lightgray] (1.center) -- (3.center);
            \node at (0, -2) {\scr{\text{in the path $p_2$}}};
        \end{tikzpicture}
    \end{split}
\end{equation*}
where the other parts of $p_1$ and $p_2$ are same.
By \eqref{eq:monomial map}, we have
\begin{equation*}
    \m(p_1) + \m(p_2) = \left(Y_{\ov{k}, \ell-1 + 2\delta(k>n)} + Y_{\ov{k}, \ell-1 + \delta(k>n)} A_{\ov{k}, \ell + 2\delta(k>n)}^{-1} \right)M,
\end{equation*}
where $M \in \mathbb{Z}\left[Y_{\ov{l}, \ell}^{\pm 1}\right]_{(l, \ell) \in \X}$ has no factor $Y_{j,p}^{\pm 1}$ for $p \in \Z$ since $|C| = 2$.
Since the $j$-th screening operator $S_j$ is a derivation,
it follows from \eqref{eq:description of ker Si} that $\m(p_1) + \m(p_2) \in \ker(S_j)$.
\smallskip

\noindent
{\it Case 3}. $|C|=3$.
If $j = n$, then the number of paths in $C$ is at most $2$, since a path in $C$ has at most one upper corner.
Therefore, we assume $j \neq n$.
Let $p_1$, $p_2$ and $p_3$ be the paths in $\ov{\Pa}_{i,k} \cap C$.
Then we may assume that
\begin{equation*}
    p_2 = p_1 \A_{N-j,\ell}^{-1} \quad \text{ and } \quad
    p_3 = p_2 \A_{j, \ell}^{-1}.
\end{equation*}
By the proof of Lemma \ref{lem:maximal length of j-component}, the upper corners in $p_1$ should be located horizontally; otherwise, $|C|>3$ (this case  is in {\it Case 4} below).
Then the local configurations of $p_1$, $p_2$ and $p_3$ are depicted as follows:
{\allowdisplaybreaks
\begin{align*}
		\raisebox{0.5cm}{
        \begin{tikzpicture}[scale=0.5]
            \draw[help lines, color=gray!30, dashed] (-2.1,-1.1) grid (2.1, 1.5); 
            \node at (-1, 1.5) {\scalebox{0.55}{\scr{$j-1$}}};
            \node at (0, 1.5) {\scalebox{0.55}{\scr{$j$}}};
            \node at (1, 1.5) {\scalebox{0.55}{\scr{$j+1$}}};
            %
            \node at (-3, 1) {\scalebox{0.8}{\scr{$\ell-1$}}};
            \node at (-3, 0) {\scalebox{0.8}{\scr{$\ell$}}};
            \node at (-3, -1) {\scalebox{0.8}{\scr{$\ell+1$}}};
            \node (1) at (0, 1) {$\bullet$};
            \node (2) at (-1, 0) {$\bullet$};
            \node (3) at (1,  0) {$\bullet$};
            %
            \draw[draw=lightgray] (1.center) -- (2.center);
            \draw[draw=lightgray] (1.center) -- (3.center);
        \end{tikzpicture}
        }
        & \raisebox{1cm}{$\cdots$}
        \raisebox{0.15cm}{
        \begin{tikzpicture}[scale=0.5]
            \draw[help lines, color=gray!30, dashed] (-2.1,-1.1) grid (2.1, 1.5); 
            \node at (-1, 1.5) {\scalebox{0.45}{\scr{$N-j-1$}\,\,\,}};
            \node at (0, 1.5) {\scalebox{0.45}{\scr{$N-j$}}};
            \node at (1, 1.5) {\scalebox{0.45}{\,\,\,\scr{$N-j+1$}}};
            %
            \node (1) at (0, 1) {$\bullet$};
            \node (2) at (-1, 0) {$\bullet$};
            \node (3) at (1,  0) {$\bullet$};
            %
            \draw[draw=lightgray] (1.center) -- (2.center);
            \draw[draw=lightgray] (1.center) -- (3.center);
           	\node at (0.2, -1.7) {\scr{\text{in the path $p_1$}}};
        \end{tikzpicture}
        }
        &
        \raisebox{0.45cm}{
        \begin{tikzpicture}[scale=0.5]
            \draw[help lines, color=gray!30, dashed] (-2.1,-1.1) grid (2.1, 1.5); 
            \node at (-1, 1.5) {\scalebox{0.45}{\scr{$j-1$}}};
            \node at (0, 1.5) {\scalebox{0.45}{\scr{$j$}}};
            \node at (1, 1.5) {\scalebox{0.45}{\scr{$j+1$}}};
            %
            \node at (-3, 1) {\scalebox{0.8}{\scr{$\ell-1$}}};
            \node at (-3, 0) {\scalebox{0.8}{\scr{$\ell$}}};
            \node at (-3, -1) {\scalebox{0.8}{\scr{$\ell+1$}}};
            \node (1) at (0, 1) {$\bullet$};
            \node (2) at (-1, 0) {$\bullet$};
            \node (3) at (1,  0) {$\bullet$};
            %
            \draw[draw=lightgray] (1.center) -- (2.center);
            \draw[draw=lightgray] (1.center) -- (3.center);
        \end{tikzpicture}
        }
        & \raisebox{1cm}{$\cdots$}
        \raisebox{0.15cm}{
        \begin{tikzpicture}[scale=0.5]
            \draw[help lines, color=gray!30, dashed] (-2.1,-1.1) grid (2.1, 1.5); 
            \node at (-1, 1.5) {\scalebox{0.55}{\scr{$N-j-1$}}\,\,\,};
            \node at (0, 1.5) {\scalebox{0.55}{\scr{$N-j$}}};
            \node at (1, 1.5) {\scalebox{0.55}{\,\,\,\scr{$N-j+1$}}};
            %
            \node (1) at (0, -1) {$\bullet$};
            \node (2) at (-1, 0) {$\bullet$};
            \node (3) at (1,  0) {$\bullet$};
            %
            \draw[draw=lightgray] (1.center) -- (2.center);
            \draw[draw=lightgray] (1.center) -- (3.center);
            \node at (0.2, -1.7) {\scr{\text{in the path $p_2$}}};
        \end{tikzpicture}
        } \\
        \raisebox{0.5cm}{
        \begin{tikzpicture}[scale=0.5]
            \draw[help lines, color=gray!30, dashed] (-2.1,-1.1) grid (2.1, 1.5); 
            \node at (-1, 1.5) {\scalebox{0.55}{\scr{$j-1$}}};
            \node at (0, 1.5) {\scalebox{0.55}{\scr{$j$}}};
            \node at (1, 1.5) {\scalebox{0.55}{\scr{$j+1$}}};
            %
            \node at (-3, 1) {\scalebox{0.8}{\scr{$\ell-1$}}};
            \node at (-3, 0) {\scalebox{0.8}{\scr{$\ell$}}};
            \node at (-3, -1) {\scalebox{0.8}{\scr{$\ell+1$}}};
            \node (1) at (0, -1) {$\bullet$};
            \node (2) at (-1, 0) {$\bullet$};
            \node (3) at (1,  0) {$\bullet$};
            %
            \draw[draw=lightgray] (1.center) -- (2.center);
            \draw[draw=lightgray] (1.center) -- (3.center);
        \end{tikzpicture}
        }
        & \raisebox{1cm}{$\cdots$}
        \raisebox{-0.1cm}{
        \begin{tikzpicture}[scale=0.6]
            \draw[help lines, color=gray!30, dashed] (-2.1,-1.1) grid (2.1, 1.5); 
            \node at (-1, 1.5) {\scalebox{0.5}{\scr{$N-j-1$}}\,\,\,};
            \node at (0, 1.5) {\scalebox{0.5}{\scr{$N-j$}}};
            \node at (1, 1.5) {\scalebox{0.5}{\,\,\,\scr{$N-j+1$}}};
            %
            \node (1) at (0, -1) {$\bullet$};
            \node (2) at (-1, 0) {$\bullet$};
            \node (3) at (1,  0) {$\bullet$};
            %
            \draw[draw=lightgray] (1.center) -- (2.center);
            \draw[draw=lightgray] (1.center) -- (3.center);
            \node at (0, -1.7) {\scr{\text{in the path $p_3$}}};
        \end{tikzpicture}
        }
\end{align*}
}
\!where the other parts of $p_1$, $p_2$ and $p_3$ are same.
By \eqref{eq:monomial map}, we have
\begin{equation*}
\begin{split}
    & \m(p_1) + \m(p_2) + \m(p_3) \\
    & \qquad =
    \left( 
        Y_{j,\ell-1}Y_{j,\ell+1}
        +
        Y_{j,\ell-1}Y_{j,\ell+1} A_{j,\ell+2}^{-1}
        +
        Y_{j,\ell-1}Y_{j,\ell+1} A_{j,\ell+2}^{-1} A_{j,\ell}^{-1}
    \right)M,
\end{split}
\end{equation*}
where $M \in \mathbb{Z}\left[Y_{\ov{l}, \ell}^{\pm 1}\right]_{(l, \ell) \in \X}$ has no any factor $Y_{j,p}^{\pm 1}$ for $p \in \Z$.
Since the $j$-th screening operator $S_j$ is a derivation,
it follows from \eqref{eq:description of ker Si} 
that $\m(p_1) + \m(p_2) + \m(p_3) \in \ker(S_j)$.
\smallskip

\noindent
{\it Case 4}. $|C|=4$.
Let $p_1$, $p_2$, $p_3$ and $p_4$ be the paths in $\ov{\Pa}_{i,k} \cap C$.
By the proof of Lemma \ref{lem:maximal length of j-component},
we assume that
\begin{equation*}
    p_2 = p_1 \A_{j,\ell}^{-1}, \quad
    p_3 = p_1 \A_{N-j, \ell'}, \quad
    p_4 = p_2 \A_{N-j, \ell'} = p_3 \A_{j, \ell},
\end{equation*}
where $\ell < \ell'$.
Then the local configurations of $p_1$, $p_2$, $p_3$ and $p_4$ are depicted as follows:
{\allowdisplaybreaks
\begin{align*}
&
        \raisebox{0.25cm}{
        \begin{tikzpicture}[scale=0.5]
            \draw[help lines, color=gray!30, dashed] (-2.1,-1.1) grid (2.1, 1.5); 
            \node at (-1, 1.5) {\scalebox{0.55}{\scr{$j-1$}}};
            \node at (0, 1.5) {\scalebox{0.55}{\scr{$j$}}};
            \node at (1, 1.5) {\scalebox{0.55}{\scr{$j+1$}}};
            %
            \node at (-3, 1) {\scalebox{0.8}{\scr{$\ell-1$}}};
            \node at (-3, 0) {\scalebox{0.8}{\scr{$\ell$}}};
            \node at (-3, -1) {\scalebox{0.8}{\scr{$\ell+1$}}};
            \node (1) at (0, 1) {$\bullet$};
            \node (2) at (-1, 0) {$\bullet$};
            \node (3) at (1,  0) {$\bullet$};
            %
            \draw[draw=lightgray] (1.center) -- (2.center);
            \draw[draw=lightgray] (1.center) -- (3.center);
        \end{tikzpicture}}
        \raisebox{1cm}{$\cdots$}
        \begin{tikzpicture}[scale=0.5]
            \draw[help lines, color=gray!30, dashed] (-2.1,-1.1) grid (2.1, 1.5); 
            \node at (-1, 1.5) {\scalebox{0.55}{\scr{$N-j-1$}}\,\,\,};
            \node at (0, 1.5) {\scalebox{0.55}{\scr{$N-j$}}};
            \node at (1, 1.5) {\scalebox{0.55}{\,\,\,\scr{$N-j+1$}}};
            %
            \node at (-3, 1) {\scalebox{0.8}{\scr{$\ell'-1$}}};
            \node at (-3, 0) {\scalebox{0.8}{\scr{$\ell'$}}};
            \node at (-3, -1) {\scalebox{0.8}{\scr{$\ell'+1$}}};
            \node (1) at (0, 1) {$\bullet$};
            \node (2) at (-1, 0) {$\bullet$};
            \node (3) at (1,  0) {$\bullet$};
            %
            \draw[draw=lightgray] (1.center) -- (2.center);
            \draw[draw=lightgray] (1.center) -- (3.center);
            \node at (0.5, -1.5) {\scr{\text{in the path $p_1$}}};
        \end{tikzpicture}
        \,\,\,
        \raisebox{0.25cm}{
            \begin{tikzpicture}[scale=0.5]
                \draw[help lines, color=gray!30, dashed] (-2.1,-1.1) grid (2.1, 1.5); 
                \node at (-1, 1.5) {\scalebox{0.55}{\scr{$j-1$}}};
                \node at (0, 1.5) {\scalebox{0.55}{\scr{$j$}}};
                \node at (1, 1.5) {\scalebox{0.55}{\scr{$j+1$}}};
                %
                \node at (-3, 1) {\scalebox{0.8}{\scr{$\ell-1$}}};
                \node at (-3, 0) {\scalebox{0.8}{\scr{$\ell$}}};
                \node at (-3, -1) {\scalebox{0.8}{\scr{$\ell+1$}}};
                \node (1) at (0, -1) {$\bullet$};
                \node (2) at (-1, 0) {$\bullet$};
                \node (3) at (1,  0) {$\bullet$};
                %
                \draw[draw=lightgray] (1.center) -- (2.center);
                \draw[draw=lightgray] (1.center) -- (3.center);
            \end{tikzpicture}}
            \raisebox{1cm}{$\cdots$}
            \begin{tikzpicture}[scale=0.5]
                \draw[help lines, color=gray!30, dashed] (-2.1,-1.1) grid (2.1, 1.5); 
                \node at (-1, 1.5) {\scalebox{0.55}{\scr{$N-j-1$}}\,\,\,};
                \node at (0, 1.5) {\scalebox{0.55}{\scr{$N-j$}}};
                \node at (1, 1.5) {\scalebox{0.55}{\,\,\,\scr{$N-j+1$}}};
                %
                \node at (-3, 1) {\scalebox{0.8}{\scr{$\ell'-1$}}};
                \node at (-3, 0) {\scalebox{0.8}{\scr{$\ell'$}}};
                \node at (-3, -1) {\scalebox{0.8}{\scr{$\ell'+1$}}};
                \node (1) at (0, 1) {$\bullet$};
                \node (2) at (-1, 0) {$\bullet$};
                \node (3) at (1,  0) {$\bullet$};
                %
                \draw[draw=lightgray] (1.center) -- (2.center);
                \draw[draw=lightgray] (1.center) -- (3.center);
                \node at (0.5, -1.5) {\scr{\text{in the path $p_2$}}};
            \end{tikzpicture}
            \\
            &
            \raisebox{0.25cm}{
        \begin{tikzpicture}[scale=0.5]
            \draw[help lines, color=gray!30, dashed] (-2.1,-1.1) grid (2.1, 1.5); 
            \node at (-1, 1.5) {\scalebox{0.55}{\scr{$j-1$}}};
            \node at (0, 1.5) {\scalebox{0.55}{\scr{$j$}}};
            \node at (1, 1.5) {\scalebox{0.55}{\scr{$j+1$}}};
            %
            \node at (-3, 1) {\scalebox{0.8}{\scr{$\ell-1$}}};
            \node at (-3, 0) {\scalebox{0.8}{\scr{$\ell$}}};
            \node at (-3, -1) {\scalebox{0.8}{\scr{$\ell+1$}}};
            \node (1) at (0, 1) {$\bullet$};
            \node (2) at (-1, 0) {$\bullet$};
            \node (3) at (1,  0) {$\bullet$};
            %
            \draw[draw=lightgray] (1.center) -- (2.center);
            \draw[draw=lightgray] (1.center) -- (3.center);
        \end{tikzpicture}}
        \raisebox{1cm}{$\cdots$}
        \begin{tikzpicture}[scale=0.5]
            \draw[help lines, color=gray!30, dashed] (-2.1,-1.1) grid (2.1, 1.5); 
            \node at (-1, 1.5) {\scalebox{0.55}{\scr{$N-j-1$}}\,\,\,};
            \node at (0, 1.5) {\scalebox{0.55}{\scr{$N-j$}}};
            \node at (1, 1.5) {\scalebox{0.55}{\,\,\,\scr{$N-j+1$}}};
            %
            \node at (-3, 1) {\scalebox{0.8}{\scr{$\ell'-1$}}};
            \node at (-3, 0) {\scalebox{0.8}{\scr{$\ell'$}}};
            \node at (-3, -1) {\scalebox{0.8}{\scr{$\ell'+1$}}};
            \node (1) at (0, -1) {$\bullet$};
            \node (2) at (-1, 0) {$\bullet$};
            \node (3) at (1,  0) {$\bullet$};
            %
            \draw[draw=lightgray] (1.center) -- (2.center);
            \draw[draw=lightgray] (1.center) -- (3.center);
            \node at (0.5, -1.7) {\scr{\text{in the path $p_3$}}};
        \end{tikzpicture}
        \,\,\,
        \raisebox{0.25cm}{
        \begin{tikzpicture}[scale=0.5]
            \draw[help lines, color=gray!30, dashed] (-2.1,-1.1) grid (2.1, 1.5); 
            \node at (-1, 1.5) {\scalebox{0.55}{\scr{$j-1$}}};
            \node at (0, 1.5) {\scalebox{0.55}{\scr{$j$}}};
            \node at (1, 1.5) {\scalebox{0.55}{\scr{$j+1$}}};
            %
            \node at (-3, 1) {\scalebox{0.8}{\scr{$\ell-1$}}};
            \node at (-3, 0) {\scalebox{0.8}{\scr{$\ell$}}};
            \node at (-3, -1) {\scalebox{0.8}{\scr{$\ell+1$}}};
            \node (1) at (0, -1) {$\bullet$};
            \node (2) at (-1, 0) {$\bullet$};
            \node (3) at (1,  0) {$\bullet$};
            %
            \draw[draw=lightgray] (1.center) -- (2.center);
            \draw[draw=lightgray] (1.center) -- (3.center);
        \end{tikzpicture}}
        \raisebox{1cm}{$\cdots$}
        \begin{tikzpicture}[scale=0.5]
            \draw[help lines, color=gray!30, dashed] (-2.1,-1.1) grid (2.1, 1.5); 
            \node at (-1, 1.5) {\scalebox{0.55}{\scr{$N-j-1$}}\,\,\,};
            \node at (0, 1.5) {\scalebox{0.55}{\scr{$N-j$}}};
            \node at (1, 1.5) {\scalebox{0.55}{\,\,\,\scr{$N-j+1$}}};
            %
            \node at (-3, 1) {\scalebox{0.8}{\scr{$\ell'-1$}}};
            \node at (-3, 0) {\scalebox{0.8}{\scr{$\ell'$}}};
            \node at (-3, -1) {\scalebox{0.8}{\scr{$\ell'+1$}}};
            \node (1) at (0, -1) {$\bullet$};
            \node (2) at (-1, 0) {$\bullet$};
            \node (3) at (1,  0) {$\bullet$};
            %
            \draw[draw=lightgray] (1.center) -- (2.center);
            \draw[draw=lightgray] (1.center) -- (3.center);
            \node at (0.5, -1.7) {\scr{\text{in the path $p_4$}}};
        \end{tikzpicture}
\end{align*}
}
\!where the other parts of $p_1$, $p_2$, $p_3$ and $p_4$ are same.
By \eqref{eq:monomial map}, we have
\begin{equation*}
    \m(p_1) + \m(p_2) + \m(p_3) + \m(p_4) 
    =
    \left( 
        Y_{j,\ell-1} + Y_{j,\ell-1}A_{j,\ell}^{-1}
    \right)
    \left(
        Y_{j,\ell'-1} + Y_{j,\ell'-1}A_{j,\ell'}^{-1}
    \right)
    M
\end{equation*}
where $M \in \mathbb{Z}\left[Y_{\ov{l}, \ell}^{\pm 1}\right]_{(l, \ell) \in \X}$ has no any factor $Y_{j,p}^{\pm 1}$ for $p \in \Z$.
Since the $j$-th screening operator $S_j$ is a derivation,
it follows from \eqref{eq:description of ker Si} that $\m(p_1) + \m(p_2) + \m(p_3) + \m(p_4) \in \ker(S_j)$.
\smallskip

By {\it Case 1}--{\it Case 4}, we conclude that $\chi \in \ker(S_j)$ for any $j \in I$.
Hence, it follows from Theorem \ref{thm:Image of qchar is the intersection of kernels of Si} and Lemma \ref{lem:unqiue dominant monomial} that $\chi = \chi_q(L(Y_{i,k}))$. We complete the proof.
\end{proof}

As a byproduct of Lemma \ref{lem:thin property} and Theorem \ref{thm:main1},  we have the following property of $\chi_q(L(Y_{i,k}))$, which was already known in \cite{KS, H05}.

\begin{cor}
    Every coefficient of monomials in $\chi_q(L(Y_{i,k}))$ is $1$.
\end{cor}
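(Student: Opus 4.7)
The plan is to combine Theorem~\ref{thm:main1} with the injectivity assertion of Lemma~\ref{lem:thin property}; these two ingredients together immediately pin down every coefficient. First I would invoke Theorem~\ref{thm:main1} to write
\begin{equation*}
\chi_q(L(Y_{i,k})) = \sum_{p \in \ov{\Pa}_{i,k}} \m(p),
\end{equation*}
so that each monomial appearing in $\chi_q(L(Y_{i,k}))$ is of the form $\m(p)$ for some $p \in \ov{\Pa}_{i,k}$, and the coefficient of a monomial $m$ equals the cardinality of the fiber $(\m|_{\ov{\Pa}_{i,k}})^{-1}(m)$.

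Second, I would apply Lemma~\ref{lem:thin property}, which states exactly that the restriction $\m|_{\ov{\Pa}_{i,k}}$ is injective. Hence every fiber above is either empty or a singleton, and so every monomial that actually appears on the right-hand side does so with multiplicity exactly one. Combining these two observations yields the claim that every coefficient of $\chi_q(L(Y_{i,k}))$ is $1$.

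There is essentially no obstacle at this stage: the corollary is a direct consequence of results already in hand. The substantive work was done earlier, namely (i) in Lemma~\ref{lem:thin property}, where the admissibility condition \eqref{eq:admissible paths} was used to rule out coincidences between $\m(p_1)$ and $\m(p_2)$ by a local analysis of the corners of maximal index $j < n$, and (ii) in Theorem~\ref{thm:main1}, where the screening-operator criterion (Theorem~\ref{thm:Image of qchar is the intersection of kernels of Si}) was verified through the classification of $j$-components of size at most four (Lemma~\ref{lem:maximal length of j-component}).
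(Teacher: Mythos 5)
Your proposal is exactly the paper's argument: the corollary is stated there as a byproduct of Theorem \ref{thm:main1} (the path-sum formula) together with Lemma \ref{lem:thin property} (injectivity of $\m|_{\ov{\Pa}_{i,k}}$), which is precisely the two-step combination you describe. The proof is correct and takes essentially the same approach as the paper.
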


\begin{rem}
{\em 
Let $V$ be a $U_q(\widehat{\g})$-module in $\mathcal{C}$. 
We say that $V$ is {\it special} if $\chi_q(V)$ has a unique dominant monomial.
It was known in \cite{FM01} that all fundamental modules are special.
In fact, since fundamental modules are special, Frenkel--Mukhin (FM) algorithm works correctly \cite[Theorem 5.9]{FM01}, that is, it generates a Laurent polynomial $\chi$ from $Y_{i,k}$ by expanding all possible $U_q(\widehat{sl}_2)$-strings while determining ``correct" coefficients, and then $\chi = \chi_q(L(Y_{i,k}))$ (see \cite[Section 5.5]{FM01} for the FM algorithm, cf.~\cite{NN11}).

On the other hand, we say that $V$ is {\it thin} if every $\ell$-weight space of $V$ has dimension $1$.
It was well-known (e.g.~\cite{CM06,H05,KS} for types $A_n^{(1)}$, $B_n^{(1)}$, and $C_n^{(1)}$, and \cite{H05} for type $G_2^{(1)}$) that every fundamental module is thin for types $A_n^{(1)}$, $B_n^{(1)}$, $C_n^{(1)}$, and $G_2^{(1)}$.
This is not true for other types. For example, for type $D_4^{(1)}$, $\chi_q(L(Y_{2,k}))$ has a monomial with coefficient $2$ (see \cite{Na03,Na10}).
}
\end{rem}

\begin{rem}
{\em 
In \cite{MY12}, the path description of $q$-characters for snake modules (including fundamental modules) in types $A_n^{(1)}$ and $B_n^{(1)}$ was proved by a criteria \cite[Theorem 3.4]{MY12} for {\it thin special} $q$-characters.
Indeed, the criteria does not depend on the choice of $\g$. 
Therefore, one can prove Theorem \ref{thm:main1} using \cite[Theorem 3.4]{MY12}.
However, the author hope to extend Theorem \ref{thm:main1} in the near future, so use more general argument based on Theorem \ref{thm:Image of qchar is the intersection of kernels of Si}.

We would like to remark that after this paper was submitted, Tong-Duan-Luo \cite{TDL} provided a path description for $q$-characters of fundamental modules in type $D_n^{(1)}$ following the proof of Theorem \ref{thm:main1}, where each path corresponds to monomial or bimonomial.
}
\end{rem}

\section{Examples and Further discussion} \label{sec:examples}

\subsection{Examples} \label{subsec:examples}
Let us illustrate the path description of $\chi_q(L(Y_{i,k}))$ for $i \in I$ and $k \in \Z_+$ in type $C_3$ up to shift of spectral parameters (i.e.~vertical parameters under the path description below).
In the following examples, an upper (resp.~lower) corner is marked as a red (resp.~blue) bullet. For a path $p \in \ov{\Pa}_{i,k}$, we indicate the corresponding monomial $\m(p)$ in the bottom right of $p$.
Let us recall \eqref{eq:monomial map}.

\begin{rem} \label{rem:computing q-characters}
{\em 
One may obtain an explicit formula of $\chi_q(L(Y_{i,k}))$ from FM algorithm \cite{FM01} (cf.~\cite{NN11}) or Hernandez--Leclerc (HL) algorithm \cite{HL16} using cluster algebras structure \cite{HL10}.
In particular, the latter one is available by a computer program\footnote{\scalebox{0.75}{ \url{https://sites.google.com/view/isjang/side-project/hernandez-leclerc-algorithm-for-q-characters}}} based on the cluster algebra package \cite{MS10} in SAGEMATH \cite{SAGE}.
For convenience, the explicit formulae of $\chi_q(L(Y_{i,k}))$ for low ranks can be found in the private note by the author of this paper\footnote{\,\scalebox{0.75}{\url{https://github.com/ILSEUNGJANG/Algorithms/blob/main/HLalgorithm/Examples/qchar_graphs.pdf}}}, which are computed by FM algorithm (also checked by HL algorithm).
}
\end{rem}

\subsubsection{First fundamental modules}
In the case of $i = 1$, the number of monomials in $\chi_q(L(Y_{1,k}))$ is $6$. 
Then the path description of $\chi_q(L(Y_{1,0}))$ is given as follows:
{\allowdisplaybreaks
\begin{align*}
&

\end{align*}
    where the non-canceled upper (resp.~lower) corner is marked as the red (resp.~blue) bullet. One may check that the corresponding monomials are contained in $\chi_q(Y_{2,1}) \cdot \chi_q(Y_{2,3})$.
    However, we have
    \begin{equation*}
        Y_{3,4} Y_{1,6}^{-1} Y_{1,8}^{-1} Y_{1,10}^{-1} \notin \mcM(Y_{2,1}Y_{2,3}),\quad
        Y_{2,5} Y_{1,10}^{-1} Y_{3,8}^{-1} \in \mcM(Y_{2,1}Y_{2,3})
    \end{equation*}
    by considering the $T$-system \cite{KNS} that is a relation on the $q$-characters of Kirillov--Reshetikhin modules (proved in \cite{Na03b} for simply laced types and \cite{H06, H10} for general types) and computing $\chi_q(Y_{2,1}Y_{2,3})$ (by the methods explained in Remark \ref{rem:computing q-characters}).
    
    Hence, the notion of (non-)overlapping paths does not seem to be enough to distinguish monomials for a simple quotient from an ordered tensor product of fundamental modules, even in the case of Kirillov--Reshetikhin modules.

    Recently, it was shown in \cite{GDL} that there is a path description for type A Hernandez--Leclerc modules \cite{HL16} in terms of a notion of compatible condition on paths, which is a generalization of the non-overlapping paths in \cite{MY12}.
    Note that the path description in \cite{GDL} allows overlapping paths.
    Since the underlying combinatorics of paths in type C is almost identical to the one of type A in \cite{MY12}, it would be interesting to find a suitable compatible condition on paths as in \cite{GDL} to extend the formula in Theorem \ref{thm:main1}.
    

\begin{thebibliography}{10}

\bibitem{AK97}
Tatsuya Akasaka and Masaki Kashiwara, \emph{Finite-dimensional representations
  of quantum affine algebras}, Publ. Res. Inst. Math. Sci. \textbf{33} (1997),
  no.~5, 839--867.

\bibitem{B94}
Jonathan Beck, \emph{Braid group action and quantum affine algebras}, Comm.
  Math. Phys. \textbf{165} (1994), no.~3, 555--568.

\bibitem{Chari02}
Vyjayanthi Chari, \emph{Braid group actions and tensor products}, Int. Math.
  Res. Not. (2002), no.~7, 357--382.

\bibitem{CM05}
Vyjayanthi Chari and Adriano~A. Moura, \emph{Characters and blocks for
  finite-dimensional representations of quantum affine algebras}, Int. Math.
  Res. Not. (2005), no.~5, 257--298.

\bibitem{CM06}
\bysame, \emph{Characters of fundamental representations of quantum affine
  algebras}, Acta Appl. Math. \textbf{90} (2006), no.~1-2, 43--63.

\bibitem{CP94}
Vyjayanthi Chari and Andrew Pressley, \emph{A guide to quantum groups},
  Cambridge University Press, Cambridge, 1994.

\bibitem{CP95}
\bysame, \emph{Quantum affine algebras and their representations},
  Representations of groups ({B}anff, {AB}, 1994), CMS Conf. Proc., vol.~16,
  Amer. Math. Soc., Providence, RI, 1995, pp.~59--78.

\bibitem{CP97}
\bysame, \emph{Factorization of representations of quantum affine algebras},
  Modular interfaces ({R}iverside, {CA}, 1995), AMS/IP Stud. Adv. Math.,
  vol.~4, Amer. Math. Soc., Providence, RI, 1997, pp.~33--40.

\bibitem{SAGE}
The~Sage Developers, \emph{{S}age {M}athematics {S}oftware ({V}ersion 9.6)},
  The Sage Development Team (2022) \url{http://www.sagemath.org}.

\bibitem{FM01}
Edward Frenkel and Evgeny Mukhin, \emph{Combinatorics of {$q$}-characters of
  finite-dimensional representations of quantum affine algebras}, Comm. Math.
  Phys. \textbf{216} (2001), no.~1, 23--57. 

\bibitem{FR99}
Edward Frenkel and Nicolai Reshetikhin, \emph{The {$q$}-characters of
  representations of quantum affine algebras and deformations of {$\mathscr
  W$}-algebras}, Recent developments in quantum affine algebras and related
  topics ({R}aleigh, {NC}, 1998), Contemp. Math., vol. 248, Amer. Math. Soc.,
  Providence, RI, 1999, pp.~163--205.

\bibitem{GDL}
JingMin Guo, Bing Duan, and Yan-Feng Luo, \emph{Combinatorics of the
  {$q$}-characters of {H}ernandez-{L}eclerc modules}, J. Algebra \textbf{605}
  (2022), 253--295.

\bibitem{He04}
David Hernandez, \emph{Algebraic approach to {$q,t$}-characters}, Adv. Math.
  \textbf{187} (2004), no.~1, 1--52.

\bibitem{H05}
\bysame, \emph{Monomials of {$q$} and {$q, t$}-characters for non simply-laced
  quantum affinizations}, Math. Z. \textbf{250} (2005), no.~2, 443--473.

\bibitem{H06}
\bysame, \emph{The {K}irillov-{R}eshetikhin conjecture and solutions of
  {$T$}-systems}, J. Reine Angew. Math. \textbf{596} (2006), 63--87.

\bibitem{H10}
\bysame, \emph{Kirillov-{R}eshetikhin conjecture: the general case}, Int. Math.
  Res. Not. IMRN (2010), no.~1, 149--193.

\bibitem{H10b}
\bysame, \emph{Simple tensor products}, Invent. Math. \textbf{181} (2010),
  no.~3, 649--675.

\bibitem{HL10}
David Hernandez and Bernard Leclerc, \emph{Cluster algebras and quantum affine
  algebras}, Duke Math. J. \textbf{154} (2010), no.~2, 265--341.

\bibitem{HL16}
\bysame, \emph{A cluster algebra approach to {$q$}-characters of
  {K}irillov-{R}eshetikhin modules}, J. Eur. Math. Soc. (JEMS) \textbf{18}
  (2016), no.~5, 1113--1159.

\bibitem{Kac}
Victor~G. Kac, \emph{Infinite-dimensional {L}ie algebras}, third ed., Cambridge
  University Press, Cambridge, 1990.

\bibitem{Kas02}
Masaki Kashiwara, \emph{On level-zero representations of quantized affine
  algebras}, Duke Math. J. \textbf{112} (2002), no.~1, 117--175.

\bibitem{Kas03}
\bysame, \emph{Realizations of crystals}, Combinatorial and geometric
  representation theory ({S}eoul, 2001), Contemp. Math., vol. 325, Amer. Math.
  Soc., Providence, RI, 2003, pp.~133--139.

\bibitem{KNS}
Atsuo Kuniba, Tomoki Nakanishi, and Junji Suzuki, \emph{Functional relations in
  solvable lattice models. {I}. {F}unctional relations and representation
  theory}, Internat. J. Modern Phys. A \textbf{9} (1994), no.~30, 5215--5266.

\bibitem{KS}
Atsuo Kuniba and Junji Suzuki, \emph{Analytic {B}ethe ansatz for fundamental
  representations of {Y}angians}, Comm. Math. Phys. \textbf{173} (1995), no.~2,
  225--264.

\bibitem{MY12}
E.~Mukhin and C.~A.~S. Young, \emph{Path description of type {B}
  {$q$}-characters}, Adv. Math. \textbf{231} (2012), no.~2, 1119--1150.

\bibitem{MS10}
Gregg Musiker and Christian Stump, \emph{A compendium on the cluster algebra
  and quiver package in {\tt {s}age}}, S\'{e}m. Lothar. Combin. \textbf{65}
  (2010/12), Art. B65d, 67.

\bibitem{NN06}
Wakako Nakai and Tomoki Nakanishi, \emph{Paths, tableaux and {$q$}-characters
  of quantum affine algebras: {T}he {$C_n$} case}, J. Phys. A \textbf{39}
  (2006), no.~9, 2083--2115.

\bibitem{NN07a}
\bysame, \emph{Paths and tableaux descriptions of {J}acobi-{T}rudi determinant
  associated with quantum affine algebra of type {$C_n$}}, SIGMA Symmetry
  Integrability Geom. Methods Appl. \textbf{3} (2007), Paper 078, 20.

\bibitem{NN11}
\bysame, \emph{On {F}renkel-{M}ukhin algorithm for {$q$}-character of quantum
  affine algebras}, Exploring new structures and natural constructions in
  mathematical physics, Adv. Stud. Pure Math., vol.~61, Math. Soc. Japan,
  Tokyo, 2011, pp.~327--347.

\bibitem{Na03b}
Hiraku Nakajima, \emph{{$t$}-analogs of {$q$}-characters of
  {K}irillov-{R}eshetikhin modules of quantum affine algebras}, Represent.
  Theory \textbf{7} (2003), 259--274.

\bibitem{Na03}
\bysame, \emph{{$t$}-analogs of {$q$}-characters of quantum affine algebras of
  type {$A_n,D_n$}}, Combinatorial and geometric representation theory
  ({S}eoul, 2001), Contemp. Math., vol. 325, Amer. Math. Soc., Providence, RI,
  2003, pp.~141--160.

\bibitem{Na10}
\bysame, \emph{{$t$}-analogs of {$q$}-characters of quantum affine algebras of
  type {$E_6,E_7,E_8$}}, Representation theory of algebraic groups and quantum
  groups, Progr. Math., vol. 284, Birkh\"{a}user/Springer, New York, 2010,
  pp.~257--272.

\bibitem{TDL}
Jun Tong, Bing Duan, and Yan-Feng Luo, \emph{A combinatorial model for
  {$q$}-characters of fundamental modules of type {$D_n$}}, preprint,
  arXiv:2305.13671 (2023).

\end{thebibliography}

\providecommand{\bysame}{\leavevmode\hbox to3em{\hrulefill}\thinspace}
\providecommand{\MR}{\relax\ifhmode\unskip\space\fi MR }
\providecommand{\MRhref}[2]{%
  \href{http://www.ams.org/mathscinet-getitem?mr=#1}{#2}
}
\providecommand{\href}[2]{#2}

\end{document}